\newcommand{%
	
	\import{./Figures/}{.pdf_tex}
}[2]{%
	
	\import{./Figures/}{#1.pdf_tex}
}
\newtheorem{assump}{\textbf{Assumption}}
\newtheorem{prop}{\textbf{Proposition}}
\newtheorem{theo}{\textbf{Theorem}}
\newtheorem{remark}{Remark}
\newtheorem{defi}{Definition}
\DeclareMathOperator*{\minimise}{minimize}
\DeclareMathOperator*{\argmin}{argmin}
\DeclareMathOperator*{\argmax}{argmax}
\begin{document}

\title{On the exact feasibility of convex scenario programs with discarded constraints}
\author{Licio Romao, Antonis~Papachristodoulou, and Kostas Margellos % <-this % stops a space
	\thanks{L. Romao is supported by the Coordination for the Improvement of Higher Education Personnel (CAPES) - Brazil. The work of A. Papachristodoulou and K. Margellos has been supported by EPSRC UK under grants $ \mathrm{EP}/\mathrm{M}002454/1 $ and $ \mathrm{EP}/\mathrm{P}03277\mathrm{X}/1  $, respectively.}% <-this % stops a space
	\thanks{The authors are with the Department of Engineering Science, University of Oxford, Parks Road, Oxford OX1 3PJ, UK. {\tt\small \{licio.romao,  antonis, kostas.margellos\}@eng.ox.ac.uk.}}
}%%

% The paper headers
\markboth{Journal of \LaTeX\ Class Files,~Vol.~14, No.~8, August~2015}%
{Shell \MakeLowercase{\textit{et al.}}: Bare Demo of IEEEtran.cls for IEEE Journals}

% make the title area
\maketitle

% As a general rule, do not put math, special symbols or citations
% in the abstract or keywords.
\begin{abstract}
We revisit the so-called sampling and discarding approach used to quantify the probability of constraint violation of a solution to convex scenario programs when some of the original samples are allowed to be discarded. Motivated by two scenario programs that possess analytic solutions and the fact that the existing bound for scenario programs with discarded constraints is not tight, we analyze a removal scheme that consists of a cascade of optimization problems, where at each step we remove a superset of the active constraints. By relying on results from compression learning theory, we show that such a removal scheme leads to less conservative bounds for the probability of constraint violation than the existing ones. We also show that the proposed bound is tight by characterizing a class of optimization problems that achieves the given upper bound. The performance improvement of the proposed methodology is illustrated by an example that involves a resource sharing linear program.
\end{abstract}

% Note that keywords are not normally used for peerreview papers.
\begin{IEEEkeywords}
Scenario approach, randomized algorithms, chance-constrained optimization, probabilistic methods.
\end{IEEEkeywords}

\IEEEpeerreviewmaketitle

\section{Introduction}
% The very first letter is a 2 line initial drop letter followed
% by the rest of the first word in caps.
% 
% form to use if the first word consists of a single letter:
% \IEEEPARstart{A}{demo} file is ....
% 
% form to use if you need the single drop letter followed by
% normal text (unknown if ever used by the IEEE):
% \IEEEPARstart{A}{}demo file is ....
% 
% Some journals put the first two words in caps:
% \IEEEPARstart{T}{his demo} file is ....
% 
% Here we have the typical use of a "T" for an initial drop letter
% and "HIS" in caps to complete the first word.

\IEEEPARstart{U}ncertain optimization programs capture a wide class of engineering applications. Tractability of this class of optimization problems is an active area of research~\cite{Vid:98,ATC:09,MGL:14,CGC:15,Cal:17,Ram:18,CGR:18,CG:18}. In the last decades, several approaches have been developed to cope with uncertainty in an optimization context. Among those, robust optimization~\cite{BS:05,BBC:11,BN:08,BGN:09} has been successfully applied to several control problems~\cite{dOBG:99,FDT:03,GdOB:03,CDT:11,TCD:12,RdOPO:16}. It consists of making certain assumptions, often arbitrary, on the geometry of the uncertainty set (ellipsoidal, polytopic, etc.) and then optimizing over the worst case performance within this set. Another approach is chance-constrained optimization~\cite{Pre:70,NS:06,PAS:09} that relies on imposing constraints that only need to be satisfied with given probability. However, optimization problems with chance-constraints are hard to solve in general, without imposing any assumption on the underlying distribution of the uncertainty (e.g., Gaussian).

An alternative to robust and chance-constrained optimization involves data driven algorithms. Within  this context, this paper lies in the realm of the scenario approach theory~\cite{CC:05,CC:06,CG:08,CG:11,Cal:10,MPL:15,CG:18,CG:18b,GC:19}: a randomized technique which involves generating a finite number of scenarios and enforcing a different constraint for each of them. Under convexity, the optimal solution to such a scenario program is shown to be feasible (with certain probability) to the associated chance-constrained program. One of the fundamental developments in the scenario approach literature is to provide a distribution-free bound on the probability of constraint violation that holds for all convex problems~\cite{CG:08}. Moreover, this bound is tight in the sense that it is achieved by the so-called class of fully-supported optimization problems, however, it might be conservative for more general problem classes.

To alleviate this conservatism and trade feasibility to performance, the so-called sampling and discarding~\cite{CG:11} (see also \cite{CG:18b}) was introduced; a similar result known as scenario approach with constraint removal was also developed in \cite{Cal:10}. These allow removing some of the extracted scenarios and enforcing the constraints only on the remaining ones, thus improving the performance in terms of optimality of the resulting solution. As opposed to the original bound in \cite{CG:08}, however, the bound on the probability of constraint violation in~\cite{CG:11,Cal:10} is not tight. 

Similarly to the motivation of \cite{CG:11}, our main goal is to improve performance and decrease the conservatism of the solution obtained by means of the scenario approach theory. We capitalize on the fact that the bound of the sampling-and-discarding scheme is not tight, to provide a less conservative and tight bound on the probability of constraint violation for convex scenario programs with discarded constraints. To this end, we develop a novel analysis approach to study a removal procedure that consists of solving a cascade of scenario programs and removing, at each stage, scenarios in an integer multiple of the dimension of the decision variables. Our theoretical findings bear important consequences in the application of the scenario theory to control problems \cite{CC:06,CL:13,MMB:15,VTKQ:20,SOZZ:20,KP:20,DA:20,CG:20,CJJKT:20,JP:20,SYN:20,SAP:20}, as we may be able to achieve better performance while guaranteeing the same level of constraint violation and confidence. The proposed bound on the probability of constraint violation is similar in terms of complexity to the one of \cite{CG:11,Cal:10}; it is also distribution-free and holds, under a non-degeneracy assumption (to be formally defined in the sequel), for all convex problems. We also show that the resulting bound is tight, and characterize the class of scenario programs for which this is the case. As such, our results extend the ones of \cite{CG:11,Cal:10} and cannot be further improved. To summarize, our main contributions are: 

\begin{itemize}
	\item Proposing and analyzing a removal scheme that possesses tighter guarantees than \cite{CG:11,Cal:10} on the probability of constraint violation for scenario programs with discarded constraints (Theorems \ref{theo:main_result_fully_supp} and \ref{theo:main_result_non_deg}).
	\item Proving tightness of the resulting bound by characterizing the class of scenario programs that satisfies our bound with equality (Theorem \ref{theo:main_result_tight}).
    \item Computing analytically the solution of two scenario programs where the bound is tight (Section \ref{sec:analytic_sol}).
	\item Relaxing an assumption present in \cite{CG:11} that requires the removed scenarios to be violated by the final solution.
	\item Developing a novel proof line. Our analysis departs from the one of~\cite{CG:11}, and is based on probably approximately correct (PAC) learning concepts that use the notion of compression~\cite{MPL:15,Vid:02,FW:95}.
\end{itemize}

It is important to highlight that our analysis holds for a particular discarding scheme, which requires removing scenarios in batches, preventing us to remove them one by one. Extension to this direction is outside the scope of the current paper. Moreover, all our results are \emph{a priori}; possibly less conservative but \emph{a posteriori} results are available~\cite{CG:18,CGR:18,GC:19}, however, follow a different conceptual and analysis line from the one adopted in this paper.

The paper is organized as follows: Section~\ref{sec:intro_main_concepts} reviews some background results on the scenario approach with discarded constraints and certain learning theoretic concepts. Section \ref{sec:analytic_sol} motivates the main results of the paper by means of two scenario programs that possess analytic solutions. Section~\ref{sec:discard_scheme} introduces the proposed scenario discarding scheme and states the main results of the paper. Proofs are provided in Section~\ref{sec:proof_main_results}. Section \ref{sec:tight} characterizes the class of optimization programs for which the proposed result is tight and Section \ref{sec:exam} illustrates the theoretical results by means of a numerical example. Finally, Section \ref{sec:concl} concludes the paper and provides some directions for future work.

\section{Scenario optimization with discarded scenarios}
\label{sec:intro_main_concepts}

\subsection{Sampling and discarding}
 \label{sec:prob_state}

Let $ \Delta $ be the space where an uncertainty vector takes values from and denote by $ (\Delta,\mathcal{F},\mathbb{P}) $ the associated probability space, where $ \mathcal{F} $ is a $ \sigma$-algebra and $ \mathbb{P}:\mathcal{F} \rightarrow [0,1]$ is a probability measure on $ \Delta $ (see~\cite{Sal:16} for more details). Fix any $m \in \mathbb{N}$ and let $ S = \{ \delta_1, \delta_2, \ldots, \delta_m\} $ be independent and identically distributed (i.i.d.) samples from $ \mathbb{P}$. 
Note that $ (\delta_1,\ldots,\delta_m) \in \Delta^m $; a natural probability space associated with $ \Delta^m $ is $ (\Delta^m, \otimes_{i = 1 }^m \mathcal{F}, \mathbb{P}^m) $, where $ \otimes_{i = 1}^m \mathcal{F} $ is the smallest $ \sigma$-algebra containing the cone sets $ \prod_{i = 1}^m \mathcal{F}$.
Our analysis is based on a data-driven interpretation, where $\mathbb{P}$ is considered to be fixed, but possibly unknown, and the only information about uncertainty is a collection of i.i.d. scenarios $S$.

We consider convex optimization programs affected by uncertainty $\delta$, and represent uncertainty by means of scenarios. This gives rise to the so-called convex scenario programs, where constraints are enforced only on the scenarios in $ S $~\cite{CC:05,CG:08,Cal:10}. We are particularly interested in the case where some of the scenarios are removed, in view of improving the performance of the obtained solution.
This is known as \emph{sampling and discarding} in the terminology of \cite{CG:11} (also known as scenario approach with constraint removal in \cite{Cal:10}).

To this end, for any set $R \subset S$, with $| R | = r < m$, consider the following problem
\begin{equation}
\begin{aligned}
\minimise_{x \in X}  &\quad c^\top x \\ 
\mathrm{subject~to}  &\quad g(x,\delta) \leq 0, \text{ for all } \delta \in S\setminus R,
\end{aligned}
\label{eq:P_k}
\end{equation}
where $ x \in \mathbb{R}^d $, $ X $ is a closed and convex set of $ \mathbb{R}^d $, and function $ g:\mathbb{R}^d \times \Delta \rightarrow \mathbb{R} $ is convex in $ x $ for all $ \delta \in \Delta$. The subset $ R $ contains scenarios that have been removed by means of a procedure that uses $ S $ as input; hence, strictly speaking $ R $ depends on the scenarios $ S $ but this dependency is omitted for simplicity. If $R = \emptyset$, then one recovers the standard scenario approach \cite{CC:06,CG:08}. Moreover, the objective function is taken to be affine without loss of generality; in case of an arbitrary convex objective function, an epigraphic reformulation would render the problem in the form of \eqref{eq:P_k}. Note that only convex scenarios programs will be considered, as in~\cite{CG:11,Cal:10}. 
\begin{assump}[Feasibility, Uniqueness]
	For any $ S \subset \Delta^m, R \subset S $, the optimal solution of~\eqref{eq:P_k} exists and is unique.
	\label{assump:Feas_Uniq}
\end{assump}
In case of multiple solutions a convex tie-break rule could be selected to single-out a particular one, thus relaxing the uniqueness requirement of Assumption \ref{assump:Feas_Uniq}.

Denote by $x^\star(S)$ the (unique under Assumption \ref{assump:Feas_Uniq}) minimizer of \eqref{eq:P_k}. Note that we introduce $S$ as argument since the optimal solution of~\eqref{eq:P_k} is a random variable that depends on all extracted scenarios, i.e., it is a random variable that takes values on the space $ \Delta^m $. The following result from \cite{CG:11} characterizes the probability that $x^\star(S)$ violates the constraints for a new realization of $\delta$ exceeds a given level $\epsilon \in (0,1)$.

\begin{theo}[Theorem~2.1,~\cite{CG:11}, or Theorem~4.1,~\cite{Cal:10}]
Consider Assumption \ref{assump:Feas_Uniq}, and fix $ \epsilon \in (0,1) $. Let $m > d+r$ and denote by $x^\star(S)$ the optimal solution of \eqref{eq:P_k}. If with $\mathbb{P}^m-$probability one all removed scenarios are violated by the resulting solution $x^\star(S)$, i.e., $g(x^\star(S),\delta)>0$ for all $\delta \in R$, with $\mathbb{P}^m$-probability one, then
\begin{align}
\mathbb{P}^m \bigg\{ &(\delta_1,\ldots,\delta_{m}) \in \Delta^m: \mathbb{P} \big\{ \delta \in \Delta: g(x^\star(S),\delta) > 0 \big\}  > \epsilon  \bigg\} \nonumber  \\ 
& \hspace{1.2cm} \leq {r+d-1 \choose r} \sum_{i = 0}^{r+d-1} {m \choose i}\epsilon^i (1-\epsilon)^{m-i}.\label{eq:thm_sampl-disc}
\end{align} 
\label{theo:sampl-disc}
\end{theo}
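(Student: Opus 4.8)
The plan is to reduce the sampling-and-discarding problem to a standard scenario program without discarding, and then invoke the classical bound of \cite{CG:08} on the residual problem. The key observation is the following: under Assumption~\ref{assump:Feas_Uniq}, and under the hypothesis that all $r$ removed scenarios are violated by $x^\star(S)$, the solution $x^\star(S)$ coincides with the optimal solution of the problem in which the constraints are enforced on the $m-r$ retained scenarios \emph{together with} a set of at most $d$ support constraints drawn from among those retained ones. More precisely, I would first recall the notion of support constraints (a constraint whose removal strictly improves the objective) and the Helly-type fact that a convex program in $\mathbb{R}^d$ has at most $d$ support constraints. The minimizer $x^\star(S)$ of \eqref{eq:P_k} is determined — in the sense that it is the unique optimizer — by its at most $d$ support scenarios inside $S \setminus R$.

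Next I would set up the counting argument. Condition on which subset of $m-r$ indices is retained; there are $\binom{m}{r}$ choices, but because the removed scenarios are required to be violated and the problem is nondegenerate-like in the sense of Assumption~\ref{assump:Feas_Uniq}, one argues (as in \cite{CG:11}) that only a reduced combinatorial factor survives. Concretely, the violation event for $x^\star(S)$ can be written as a union over the possible support sets: there exists a set $I$ of at most $d+r-1$ indices such that $x^\star(S)$ is the optimizer of the subprogram on those indices and all remaining $m - |I|$ scenarios are satisfied while the probability-of-violation exceeds $\epsilon$. For each fixed such $I$, the remaining $m - |I|$ scenarios are i.i.d.\ and independent of the solution determined by $I$, so the probability that all of them satisfy a constraint set violated with probability more than $\epsilon$ is at most $(1-\epsilon)^{m-|I|}$. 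Summing over the $\binom{r+d-1}{r}$ relevant index patterns and over the $\binom{m}{i}$ ways of placing $i = |I|$ indices among $m$, weighting by $\epsilon^i$ for the violated/removed ones, yields exactly the right-hand side of \eqref{eq:thm_sampl-disc}.

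The main technical obstacle — and the step I expect to require the most care — is justifying that the effective "compression set" has size at most $r+d-1$ rather than $r+d$, and handling the measure-theoretic subtleties when the argument is phrased as a union bound over index subsets (the events are not disjoint, and one must be careful that conditioning on the identity of the support set does not destroy the independence needed for the $(1-\epsilon)^{m-i}$ estimate). This is precisely where the hypothesis that every discarded scenario is violated by $x^\star(S)$ enters: it forces the removed scenarios to behave like "active" constraints in the combinatorial bookkeeping, so that the discarding procedure can be absorbed into a compression-scheme-style argument of size $r+d$, with the standard $-1$ improvement coming from the fact that one support constraint can be made implicit by the optimality conditions. I would then assemble these pieces, citing \cite{CG:08} for the base scenario bound and \cite{CG:11,Cal:10} for the reduction lemma, and verify that the algebra collapses to the stated binomial expression.
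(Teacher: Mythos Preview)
The paper does not prove Theorem~\ref{theo:sampl-disc}; it is quoted verbatim as a background result from \cite{CG:11,Cal:10} and no argument is supplied. Consequently there is no ``paper's own proof'' against which to compare your sketch.

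That said, your outline is in the spirit of the proof in \cite{CG:11}: reduce the discarding solution to one determined by a small generating set (the $r$ discarded scenarios together with at most $d$ support scenarios of the residual program), then invoke the \cite{CG:08} tail bound on that reduced problem and pay a combinatorial factor for the non-uniqueness of the generating set. Two points in your write-up are inaccurate and would derail a clean proof if left as stated.

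First, the generating (compression) set has size $r+d$, not $r+d-1$. The upper summation index $r+d-1$ in \eqref{eq:thm_sampl-disc} is simply the \cite{CG:08} bound for a problem with $\zeta=r+d$ support/compression constraints, which reads $\sum_{i=0}^{\zeta-1}\binom{m}{i}\epsilon^{i}(1-\epsilon)^{m-i}$; there is no ``$-1$ improvement from an implicit support constraint''. Your sentence ``there exists a set $I$ of at most $d+r-1$ indices such that $x^\star(S)$ is the optimizer of the subprogram on those indices'' is therefore off by one and, as written, false.

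Second, the factor $\binom{r+d-1}{r}$ is not obtained by ``weighting by $\epsilon^{i}$ for the violated/removed ones'' nor from optimality conditions. In \cite{CG:11} it arises from a union bound over the possible ways an admissible removal rule can select $r$ scenarios out of the $r+d$ essential ones while still satisfying the ``every removed scenario is violated'' hypothesis; equivalently, it bounds the multiplicity of the (non-unique) compression map. Your description conflates this multiplicity count with the binomial tail itself. If you want a complete argument you will need to separate these two ingredients cleanly: (i) show that the discarding solution is a function of $r+d$ scenarios and that the same function applied to any multi-sample containing those $r+d$ scenarios returns the same point (this is where the violation hypothesis is used), and (ii) bound the number of such compression functions by $\binom{r+d-1}{r}$ before applying the \cite{CG:08}/\cite{MPL:15} bound.
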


Theorem \ref{theo:sampl-disc} represents an important generalization of the scenario approach theory, as it allows the decision maker to trade feasibility to performance. Indeed, observe that the feasible set of \eqref{eq:P_k} is enlarged when $ R $ is non-empty (i.e., when scenarios are discarded), thus leading to a cost improvement with respect to the case where $ R $ is the empty set. This fact and the bound of Theorem \ref{theo:sampl-disc} enable the decision maker to improve cost, while controlling the probability of constraint violation.

It should be also noted that Theorem \ref{theo:sampl-disc} does not allow for an arbitrary discarding scheme; it rather requires that, with $\mathbb{P}^m$-probability one, all discarded scenarios are violated by the resulting solution $x^\star(S)$. This is instrumental in the proof of Theorem~\ref{theo:sampl-disc}, as shown in \cite{CG:11}. 

%Naturally, if scenarios were discarded in an optimal way, this may lead to a better improvement in performance compared to any other removal scheme. However, the optimal discarding scheme is of combinatorial complexity, and typically one resorts to a greedy removal algorithm as in \cite{CG:11}. 

Besides, if $ r = 0 $ the bound in Theorem \ref{theo:sampl-disc} is known to hold with equality for a class of scenario programs called fully-supported programs (see Section \ref{sec:intro_main_concepts} or \cite{CG:08,CG:11} for more details). When scenarios are discarded, i.e., when $ r \neq 0 $, it is elusive how to construct a removable scheme that allows for a tight bound. It is shown in Section 4.2 of \cite{CG:11} that 

\small 
\begin{align}
\sup_{\mathcal{P},\mathcal{R}} \mathbb{P}^m \bigg\{ (\delta_1,\ldots,\delta_{m})& \in \Delta^m:~  \mathbb{P} \big\{ \delta \in \Delta: g(x^\star(S),\delta) > 0 \big\}  > \epsilon  \bigg\} \nonumber \\ &\geq \sum_{i = 0}^{r+d-1} { m \choose i } \epsilon^i (1-\epsilon)^{m - i},
	\label{eq:CG_11_lower_bound}
\end{align}
\normalsize

\noindent where $\mathcal{P}$ represents the class of optimization problems in the form of \eqref{eq:P_k} that are parameterized by the set $X$, the objective function's cost vector $c$, the constraint function $g$, and (implicitly through the samples) the probability measure $\mathbb{P}$. The set $\mathcal{R}$ represents the collection of scenario removal schemes that return a solution $x^\star(S)$ that violates all the discarded scenarios. If the supremum is achieved, then \eqref{eq:CG_11_lower_bound} implies that there exists a problem in $\mathcal{P}$ and a removal scheme in $\mathcal{R}$ such that the right-hand side of \eqref{eq:CG_11_lower_bound} constitutes a lower bound for $\mathbb{P}^m \{(\delta_1,\ldots,\delta_m) \in \Delta^m:~ \mathbb{P}\{\delta \in \Delta:~g(x^\star(S),\delta)> 0 \} > \epsilon\}$. In particular, in the proof of \eqref{eq:CG_11_lower_bound} (see Section 5.2 in \cite{CG:11}), it is shown that this lower bound is admitted if the underlying problem is fully-supported (see Definition \ref{def:fully_supp} in the sequel) and the removal scheme, among the minimizers that violate all discarded scenarios, returns the one with the highest probability of constraint violation. However, the latter is not implementable, as it would require knowledge of the underlying probability distribution $\mathbb{P}$ which might be unknown. Even if this was known, computing the probability of constraint violation would require the computation of a multi-dimensional integral which is in general difficult. As such, the result in \eqref{eq:CG_11_lower_bound} is an existential statement; in fact it is not shown whether the lower bound is achievable in the sense that \eqref{eq:CG_11_lower_bound} would hold with equality.

This is in contrast with our main result in Theorem 1 that shows that the right-hand side in (3) is in fact an upper-bound for the confidence with which the probability of constraint violation exceeds $\epsilon$. Moreover, our discarding mechanism is constructive and distribution-free, in the sense that it does not require the knowledge of $\mathbb{P}$ for the computation of the resulting solution that enjoys these properties. We also show that such upper bound is tight (see Theorem \ref{theo:main_result_tight}). To achieve this, in Section \ref{sec:discard_scheme} we introduce an alternative discarding strategy composed by a cascade of optimization problems that, roughly speaking, removes a set of cardinality $d$ containing the active constraints of each stage. As a byproduct of our analysis, we also relax the assumption of Theorem~\ref{theo:sampl-disc} that requires all the removed scenarios to be violated by the final solution.

\subsection{Learning theoretic concepts} \label{sec:learning}
The following definition is crucial for the results in this paper. 

\begin{defi}[Compression set]
	Fix $ m \in \mathbb{N} $, and consider $ S \subset \Delta $ with $|S| = m$. Let $\zeta < m$, and $ C \subset S $ with cardinality $|C| = \zeta$. Consider a mapping $ \mathcal{A} : \Delta^{m} \rightarrow 2^\Delta$. If with $\mathbb{P}^m$- probability one 
	\[ 
	\delta \in \mathcal{A} \left( C\right),~ \text{ for all } \delta \in S,
	\]
	then $ C $ is called a compression set of cardinality $\zeta$ for $ \mathcal{A} $.
	\label{def:compression_set}
\end{defi}
In other words, a compression set $C$ is a subset of the samples $ S $ such that $ \mathcal{A}(C)$, i.e., the set generated using only $\zeta$ of the samples, contains all samples in $ S $, even the ones that were not included in $C$. In statistical learning theory this property is known as consistency of $\mathcal{A}(C)$ with respect to the samples \cite{Vid:02,MPL:15}. 
The main focus within a probably approximately correct (PAC) learning framework (see \cite{Vid:02} and references therein) is to quantify the probability that $\mathcal{A}(C)$ differs from $\Delta$. Since $\mathcal{A}(C)$ depends on the scenarios in $S$ (as $C$ is a selection among all scenarios), this probability is itself a random variable defined on the product probability space $ \Delta^m $.

To address this question we will use tools from PAC learnability related to compression learning. To this end, we adapt the main concepts and result of \cite{MPL:15} to the notation of our paper. 
\begin{theo}[Theorem~3,~\cite{MPL:15}]
	Fix $ \epsilon \in (0,1) $ and $\zeta < m$. If with $\mathbb{P}^m$- probability one there exists a unique compression set $C$ of cardinality $ \zeta $, then
	\begin{align}
		\mathbb{P}^m \bigg\{ (\delta_1,\ldots,\delta_{m}) \in \Delta^m&: \mathbb{P} \big\{ \delta \in \Delta: \delta \notin \mathcal{A}(C) \big\}  > \epsilon  \bigg\} \nonumber  \\ 
		& = \sum_{i = 0}^{\zeta-1} {m \choose i}\epsilon^i (1-\epsilon)^{m-i}.\label{eq:thm_compression}
	\end{align} 
	\label{theo:Kostas_paper}
\end{theo}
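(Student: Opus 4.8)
The plan is to derive the equality from an exact partition of the ``bad'' event induced by the uniqueness of the compression set, combined with the exchangeability of the i.i.d. samples and a characterisation of the distribution of the induced violation. Write $B_\epsilon := \big\{(\delta_1,\ldots,\delta_m)\in\Delta^m : \mathbb{P}\{\delta\in\Delta : \delta\notin\mathcal{A}(C)\}>\epsilon\big\}$ for the event whose probability we want. For every index set $I\subseteq\{1,\ldots,m\}$ with $|I|=\zeta$ put $S_I:=\{\delta_i : i\in I\}$ and let $E_I$ be the event that $S_I$ is the (unique) compression set of cardinality $\zeta$. Because the compression set is unique $\mathbb{P}^m$-almost surely, the events $\{E_I\}_{|I|=\zeta}$ are pairwise disjoint and cover $\Delta^m$ up to a null set, so $B_\epsilon$ is the disjoint union of $E_I\cap\{\mathbb{P}\{\delta\notin\mathcal{A}(S_I)\}>\epsilon\}$ over all such $I$, and $\mathbb{P}^m(B_\epsilon)=\sum_{|I|=\zeta}\mathbb{P}^m\big(E_I\cap\{\mathbb{P}\{\delta\notin\mathcal{A}(S_I)\}>\epsilon\}\big)$.

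Next, since the $\delta_i$ are i.i.d., $\mathbb{P}^m$ is invariant under permutations of the coordinates, so every summand equals the one for $I=\{1,\ldots,\zeta\}$, of which there are $\binom{m}{\zeta}$. Introduce $v(\delta_1,\ldots,\delta_\zeta):=\mathbb{P}\{\delta\notin\mathcal{A}(\{\delta_1,\ldots,\delta_\zeta\})\}$, a function of the first $\zeta$ samples only, and condition on those samples: then $\mathcal{A}(\{\delta_1,\ldots,\delta_\zeta\})$ and the number $v:=v(\delta_1,\ldots,\delta_\zeta)$ are fixed, the event $E_{\{1,\ldots,\zeta\}}$ forces (by the definition of a compression set) each of the remaining $m-\zeta$ independent samples into $\mathcal{A}(\{\delta_1,\ldots,\delta_\zeta\})$ -- conditional probability $(1-v)^{m-\zeta}$ -- and uniqueness holds automatically a.s. by hypothesis. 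Taking expectations over the first $\zeta$ samples gives $\mathbb{P}^m(B_\epsilon)=\binom{m}{\zeta}\,\mathbb{E}\big[\mathbbm{1}\{v>\epsilon\}(1-v)^{m-\zeta}\big]$.

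It then remains to identify the law of $v$. Running the same partition on the identity ``a unique compression set of cardinality $\zeta$ exists with probability one'' -- which, by the compression structure, holds for every sample size $m'\ge\zeta$ -- yields $\binom{m'}{\zeta}\,\mathbb{E}\big[(1-v)^{m'-\zeta}\big]=1$ for all $m'\ge\zeta$; these prescribe every moment of $1-v\in[0,1]$, and hence by the Hausdorff moment problem fix the distribution, giving $\mathbb{P}\{v\le t\}=t^\zeta$ on $[0,1]$. Substituting, $\mathbb{P}^m(B_\epsilon)=\binom{m}{\zeta}\int_\epsilon^1 \zeta\,t^{\zeta-1}(1-t)^{m-\zeta}\,dt$, and the classical incomplete-Beta/binomial identity (repeated integration by parts) rewrites the right-hand side as $\sum_{i=0}^{\zeta-1}\binom{m}{i}\epsilon^i(1-\epsilon)^{m-i}$, which is the claimed equality.

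The main obstacle is this last step: obtaining the \emph{exact} distribution of the violation $v$ rather than just a bound. This is precisely where the uniqueness assumption does the work -- it turns the decomposition into a genuine partition and forces the ``self-consistency'' relation $\delta_i\in\mathcal{A}(\{\delta_1,\ldots,\delta_\zeta\})$ to hold a.s. -- and it also requires that the compression property transfer across sample sizes so that the moment identities are available for all $m'\ge\zeta$. A secondary, routine issue is verifying that $B_\epsilon$ and $v$ are measurable on $\big(\Delta^m,\otimes_{i=1}^m\mathcal{F},\mathbb{P}^m\big)$, which we take for granted as in \cite{MPL:15}.
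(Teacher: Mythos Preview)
The paper does not prove this statement: Theorem~\ref{theo:Kostas_paper} is quoted verbatim from \cite{MPL:15} as background and no argument is given in the present paper. So there is no ``paper's own proof'' to compare against here.

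Your sketch is the standard Campi--Garatti style derivation (partition over the $\binom{m}{\zeta}$ possible locations of the compression, exchangeability, conditioning to obtain $\binom{m}{\zeta}\mathbb{E}[\mathbbm{1}\{v>\epsilon\}(1-v)^{m-\zeta}]$, and identification of the law of $v$ via the Hausdorff moment problem followed by the Beta--binomial identity), and it is correct. This is also, in essence, how the result is obtained in \cite{MPL:15} and \cite{CG:08}. The one genuine caveat you already flag is the right one: to get the moment identities $\binom{m'}{\zeta}\mathbb{E}[(1-v)^{m'-\zeta}]=1$ for \emph{all} $m'\ge\zeta$ (and in particular the self-consistency $\delta_i\in\mathcal{A}(\{\delta_1,\ldots,\delta_\zeta\})$ a.s.\ from the case $m'=\zeta$), you need the unique-compression hypothesis to hold uniformly over sample sizes, not just for the fixed $m$ in the statement. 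That is indeed how the hypothesis is meant and used in \cite{MPL:15} and in this paper's applications (the compression constructed in Sections~\ref{sec:main_proof_fully_supp}--\ref{sec:main_result_non_deg} is unique for every $m>(\ell+1)d$), so the reading is consistent, but it is worth being explicit about it.
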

For a fixed $ \epsilon \in (0,1) $, observe that the right-hand side of~\eqref{eq:thm_compression} goes to zero as $ m $ tends to infinity. This is a desirable property, as it indicates that $ \Delta $ can be asymptotically approximated by $ \mathcal{A}(C) $. Moreover, for a fixed $ m \in \mathbb{N} $, the result of Theorem~\ref{theo:Kostas_paper} provides a non-asymptotic result, quantifying the measure of the set $ \Delta \setminus\mathcal{A}(C)  $. A mapping with these properties is called PAC within the learning literature. Theorem~\ref{theo:Kostas_paper} states that if a mapping possesses a unique compression set, then it is at least $(1-\epsilon)$-accurate as an approximation of $\Delta$ (approximately correct), with confidence (probably) equal to $1-\sum_{i = 0}^{\zeta-1} {m \choose i}\epsilon^i (1-\epsilon)^{m-i}$.

\section{Motivating example: two scenario programs with analytic solutions}
	\label{sec:analytic_sol}

	\subsection{One-dimensional example}
	Suppose that $m$ i.i.d. samples, $ S = \{ \delta_1, \ldots, \delta_m \} $, are drawn from a uniform distribution on the interval $ [0,1] $. Consider the following scenario program that is in the form of \eqref{eq:P_k}.
	\begin{align}
		\minimise_{x \in [0,1]} & \quad x \nonumber \\
		\mathrm{subject~to} & \quad x \geq \delta, \quad \delta \in S \setminus R
		\label{eq:Sampled_toy_2}.
	\end{align}
	Under the choice of a uniform distribution, the optimal solution of \eqref{eq:Sampled_toy_2} is unique with $\mathbb{P}^m$-probability one. Let $ r < m $ be the number of discarded scenarios and consider a (natural) removal scheme that discards scenarios one by one by means of a cascade of scenario programs where at each stage the scenario corresponding to the active constraint is removed from the set $ S $. For instance, the first discarded scenario, which can be explicitly computed as $ \delta^{(1)} =  \argmax_{\delta \in S} \delta $, corresponds to the active constraint of \eqref{eq:Sampled_toy_2} when all scenarios in $ S $ are enforced. We then solve \eqref{eq:Sampled_toy_2} with all but the scenario removed in the previous stage being enforced, thus resulting in the scenario $ \delta^{(2)} = \argmax_{\delta \in S\setminus \delta^{(1)}} \delta$ to be discarded. We proceed similarly until $ r $ scenarios are removed. 
	
	Let $ x_{k}^\star(S) $ be the optimal solution at the $ (k+1)$-th stage of the removal procedure described in the previous paragraph. Note that $ x_{k}^\star(S) = \delta^{(k+1)} $, $ k = 0, \ldots, r $, where $ \delta^{(k+1)} $ represents the $ (k+1)$-th largest sample of $ S $. Our goal is to compute
	 the probability of constraint violation associated to the optimal solution of \eqref{eq:Sampled_toy_2} when the scenarios that belong to $ S \setminus \{ \delta^{(1)}, \ldots, \delta^{(r)} \} $ are enforced. Since $ \mathbb{P} $ is a uniform probability measure on $ [0,1] $, it is clear that, for each $ k \in \{0,\ldots,r\} $, such a probability is given by $ V(x_k^\star(S)) = \mathbb{P}\{ \delta \in \Delta: x_k^\star(S) < \delta\}  = 1 - x_k^\star(S)$. Observe that
\begin{align}
	\mathbb{P}^m &\{ (\delta_1, \ldots, \delta_m) \in \Delta^m: V(x^\star_k(S)) > \epsilon \}  \nonumber\\ &=\mathbb{P}^m \{ (\delta_1,\ldots,\delta_m) \in \Delta^m: \delta^{(k+1)} < 1-\epsilon \}\nonumber \\ &= \sum_{i = 0}^{k} {m \choose i} \epsilon^i (1-\epsilon)^{m-i}.
	\label{eq:vio_probability_dist}
\end{align}
	The first equality in \eqref{eq:vio_probability_dist} follows from the fact that $ x^\star_k(S) = \delta^{(k+1)} $ and the second by partitioning the space $ \Delta^m $ into $ k+1$ disjoint sets where each of these sets contains elements $ S $ for which exactly $ i $, $ i = 0, \ldots, k $, samples from the removed samples lie within the interval $ [1-\epsilon,1] $, i.e., exceeding $1-\epsilon$, and then applying the total law of probability. An alternative explanation using order statistics can be found in \cite{RMP:20c}. Hence, the distribution of the probability of constraint violation associated with the final solution of the considered removal strategy is given by $ V(x^\star_r(S)) $, which is obtained from \eqref{eq:vio_probability_dist} by substituting $ k = r $.
	
	\subsection{Two-dimensional example}
	Suppose that $m$ i.i.d. samples are drawn from a uniform distribution on the interval $ [0,1] $ and consider the scenario program that returns the minimum width interval containing the samples given by
	\begin{align}
		\minimise_{x,y \in [0,1],~y \geq x} & \quad y - x \nonumber \\
		\mathrm{subject~to} & \quad \delta  \in [x,y], \quad \delta \in S \setminus R
		\label{eq:Sampled_toy_3}.
	\end{align}
	For any collection of the samples $ S = \{ \delta_1, \ldots, \delta_m \} $, with $ \mathbb{P}^m$-probability one, the scenario program \eqref{eq:Sampled_toy_3} has a unique solution and, at the optimal solution, there are exactly two active constraints, namely, those associated with the smallest and the largest sample of $ S \setminus R $. Let $ r = 2 \ell < m, $ for some integer $\ell$, be an even integer and consider (similar as in the one dimensional example) a removal scheme that discards the active constraints of \eqref{eq:Sampled_toy_3} at each stage. Denote by $ x^\star_k(S) $, $  k = 0, \ldots, \ell $, the two dimensional vector containing the optimal solution of the $ (k+1)$-th stage. One of the components of $ x^\star_k(S) $ is the $ (k+1)$-th largest sample of $ S $, which we denote by $ \delta^{(k+1)} $, and the other the $ (k+1)$-th smallest sample of $ S $, which we denote by $ \delta_{(m-k)} $. 
	
	We are interested in the probability of constraint violation at the $ (k+1)$-th stage, which is given by
	\begin{align}
		V&(x^\star_k(S)) = \mathbb{P}\{ \delta \in \Delta: \delta \notin [\delta_{(m-k)},\delta^{(k+1)}] \} \nonumber \\
		&= \mathbb{P}\{ \delta \in \Delta: \delta < \delta_{(m-k)} \} + \mathbb{P}\{ \delta \in \Delta: \delta > \delta^{(k+1)} \}	\nonumber \\
		&= 1 - (\delta^{(k+1)} - \delta_{(m-k)} ) = 1 - L_{(k+1)}(S),
		\label{eq:vio_probability_2}
	\end{align}
where $ L_{(k+1)}(S) = (\delta^{(k+1)} - \delta_{(m-k)} ) $ represents the length of the interval after the removal of $ 2 k $ samples. Equation \eqref{eq:vio_probability_2} consists in the probability that a new sample is drawn from $ \mathbb{P} $ and it falls outside the interval  $ [\delta_{(m-k)},\delta^{(k+1)}] $. Let $ \epsilon \in [0,1] $, we have that 
	\begin{align}
		\mathbb{P}^m &\{ (\delta_1,\ldots,\delta_m) \in \Delta^m: V(x^\star_k(S)) > \epsilon \} \nonumber \\
		&= \mathbb{P}^m \{ (\delta_1,\ldots,\delta_m) \in \Delta^m: L_{(k+1)}(S) < 1 - \epsilon \}.
		\label{eq:toy_example_1}
	\end{align}
For each $ k \in \{ 0, \ldots, \ell \} $, let
	\begin{align}
		A_k &= \{ (\delta_1,\ldots,\delta_m) \in \Delta^m: \delta_{(m-k)} \leq \epsilon \} \nonumber \\
		B_k &= \{ (\delta_1,\ldots,\delta_m) \in \Delta^m:  V(x^\star_k(S)) > \epsilon  \},
		\label{eq:toy_example_2}
	\end{align}
where $ A_k $ contains the samples $ S $ whose $ (k + 1)$-th smallest element lies in the interval $ [0,\epsilon] $ and $ B_k $ contains the samples that lead to $ V(x^\star_k(S)) > \epsilon $. Using this notation, we can write \eqref{eq:toy_example_1} as 
	\begin{align}
		&\mathbb{P}^m \{ (\delta_1,\ldots,\delta_m) \in \Delta^m: V(x^\star_k(S)) > \epsilon \} = \mathbb{P}^m\{ B_k \}  \nonumber \\
		&= \mathbb{P}^m \{ A_k^c \cap B_k \} +  \mathbb{P}^m \{ A_k \cap B_k \} 
		\label{eq:toy_example_3}
	\end{align}
	where $ A^c $ stands for the set complement of $ A $. Let us analyze each of the terms in the right-hand side of \eqref{eq:toy_example_3} separately. We start with the first term. Note that
	\begin{align}
		&\mathbb{P}^m \{ A_k^c \cap B_k \} = \mathbb{P}^m \{ A_k^c \} \mathbb{P}^m \{ B_k | A_k^c \} = \mathbb{P}^m \{ A_k^c \} \nonumber \\ 
		&=\mathbb{P}^m \{ (\delta_1,\ldots,\delta_m) \in \Delta^m: \delta^{(k+1)} \leq 1-\epsilon \} \nonumber \\
		&= \sum_{i = 0}^k {m \choose i} \epsilon^i (1-\epsilon)^{m-i},
		\label{eq:toy_example_4}
	\end{align}
where the second equality follows from the fact that $  \mathbb{P}\{ B_k | A_k^c \} $ is equal to one due to $ A_k^c \subset B_k $, i.e., the length of $ [\delta_{(m-k)},\delta^{(k+1)}] $ is less than $ 1-\epsilon $ whenever the $ (k+1)$-th smallest sample in $ S $ is larger than $ \epsilon $; and the third equality due to the fact that $\mathbb{P}^m\{ (\delta_1,\ldots,\delta_m) \in \Delta^m: \delta_{(m-k)} > \epsilon \} = \mathbb{P}^m \{ (\delta_1,\ldots,\delta_m) \in \Delta^m: \delta^{(k+1)}  \leq 1 - \epsilon  \},$
\begin{figure}[!t]
	\centering
	%	\include[width=\linewidth]{Explanation_2.pdf}
	%\begin{minipage}[t]{0.9\columnwidth}
	%	\incfig{toy_example}{1}
	%\end{minipage}
	\includegraphics[width=0.8\linewidth]{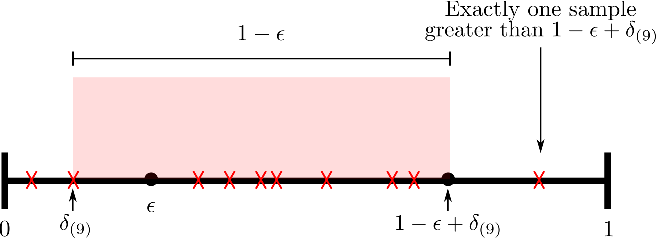}
	\caption{Realization of a sample $ S = \{\delta_1, \ldots, \delta_{10} \} $ that belongs to the set $ A_1 \cap B_1 \cap E_1^1 $ defined in \eqref{eq:toy_example_2} and \eqref{eq:toy_example_partition}.} 
	\label{fig:toy_example}
\end{figure}	
	which can be obtained by simple algebraic manipulations. Finally, the last equality holds due to \eqref{eq:vio_probability_dist}.
	
	To compute the second term in the right-hand side of \eqref{eq:toy_example_3}, it is convenient to define the partition of $ \Delta^m $ as
		\begin{align}
			E^j_k &= \left\{ (\delta_1,\ldots,\delta_m) \in \Delta^m: \text{ there are exactly } j \text{ samples } \right. \nonumber \\ 
			& \left. \hspace{2.5cm} \text{ greater than } 1 - \epsilon + \delta_{(m-k)} \right\},
			\label{eq:toy_example_partition}
		\end{align}
		where $ j = 0, \ldots, m-k $. Note that there can be no more than $ m-k $ samples greater than $ \delta_{(m-k)} $, as this would contradict the fact that $ \delta_{(m-k)} $ is the $ (k+1)$-th smallest sample of $ S $; hence, we have that $ \Delta^m = \cup_{j = 0}^{m-k} E^j_k $. For instance, Figure \ref{fig:toy_example} depicts a realization that belongs to $ A_1 \cap B_1 \cap E_1^1 $ when $ m = 10$, and exactly two discarded samples.
The partition given in \eqref{eq:toy_example_partition} allows us to write
	\begin{align}
		\mathbb{P}^m\{ &A_k \cap B_k \} = \sum_{j = 0}^{m-k} \mathbb{P}^m\{ A_k \cap B_k \cap E^j_k \} \nonumber \\ 
		&= \sum_{j = 0}^{k} \mathbb{P}^m\{ A_k \cap B_k \cap E^j_k \} + \sum_{j = k+1}^{m-k} \mathbb{P}^m\{ A_k \cap B_k \cap E^j_k \} \nonumber \\
		&= \sum_{j = 0}^{k} \mathbb{P}^m\{ A_k \cap B_k \cap E^j_k \},
		\label{eq:toy_example_6}
	\end{align}
	where the last equality follows from the fact that $ A_k \cap B_k \cap  E^j_k = \emptyset $ for all $ j > k $, since having more than $ k $ samples greater than $ 1-\epsilon+\delta_{(m-k)} $ with $ \delta_{(m-k)} \leq \epsilon $ implies that $ L_{(k+1)} \geq 1 - \epsilon $, so such a realization does not belong to $ B_k $. 
	
    We claim that
	\begin{align}
		\mathbb{P}^m&\{ A_k \cap B_k \cap E^j_k \} \nonumber \\ &= {m \choose j+k + 1} \epsilon^{j+k+1} (1-\epsilon)^{m-j-k-1}.
		\label{eq:toy_example_7}
	\end{align}
	Consider first the case where $ k=j=1 $. We can compute $ \mathbb{P}\{ A_1 \cap B_1 \cap E_1^1 \} $ as
	\begin{align*}	 
		\mathbb{P}^m\{ A_1 &\cap B_1 \cap E_1^1 \} = \int_{0}^{\epsilon}  \mathbb{P}^m\{ B_1 \cap E_1^1 | \delta_{(m-1)} = \xi \} d\xi  \nonumber \\ &= \int_{0}^{\epsilon} m \underbrace{ (m-1) (m-2) \xi (\epsilon - \xi)}_{T_1} \underbrace{(1-\epsilon)^{m-3}}_{T_2} d \xi,
	\end{align*}
	where the factor $m$ refers to the number of choices for $\xi$, the term $ T_1 $ is due to the fact that there must be exactly one sample less than $ \xi $ and exactly one sample greater than  $ 1-\epsilon+\xi $ and there are $ (m-1)(m-2) $ possible such samples once $\xi$ is chosen, and the term $ T_2 $ is due to the fact that the remaining samples ($ m-3 $ in this case) are within the interval $ [\xi,1-\epsilon+\xi] $. 
	
	To show \eqref{eq:toy_example_7} in general one may proceed inductively; alternatively, we can use the fact that the uniform distribution assigns the same probability to subsets of $ [0,1] $ that have the same length. Hence, $ \mathbb{P}^m\{A_k \cap B_k \cap E^j_k \} $ is the probability that $ j + k + 1 $ samples are outside the interval  $ [\delta_{(m-k)},1-\epsilon+\delta_{(m-k)}] $, and the remaining $ m - j - k - 1 $ ones to its complement. This immediately yields \eqref{eq:toy_example_7}.
	
	Combining \eqref{eq:toy_example_3}, \eqref{eq:toy_example_4}, \eqref{eq:toy_example_6} and \eqref{eq:toy_example_7}, we have that
	\begin{align}
		&\mathbb{P}^m\{ (\delta_1,\ldots,\delta_m) \in \Delta^m: V(x^\star_k(S)) > \epsilon \} = \mathbb{P}^m\{ B \} \nonumber \\
		&= \sum_{i = 0}^k {m \choose i} \epsilon^i (1-\epsilon)^{m-i} \nonumber \\ & \hspace{1.5cm}+ \sum_{j = 0}^k {m \choose j+k+1} \epsilon^{j+k+1} (1-\epsilon)^{m-j-k-1}  \nonumber \\
		&= \sum_{i = 0}^k {m \choose i} \epsilon^i (1-\epsilon)^{m-i} + \sum_{i = k+1}^{2k+1} {m \choose i} \epsilon^{i} (1-\epsilon)^{m-i} \nonumber
	 	\end{align}
	\begin{align} 
		&= \sum_{i = 0}^{2k + 1} {m \choose i} \epsilon^i (1-\epsilon)^{m-i}.
	\end{align}
	Set $ k=\ell $ and recall from the discussion following equation \eqref{eq:Sampled_toy_3} that $ r = 2\ell $, so we have proved that
	\begin{align}
		\mathbb{P}^m &\{ (\delta_1,\ldots,\delta_m) \in \Delta^m: V(x^\star_\ell(S)) > \epsilon \} \nonumber \\
		&= \sum_{i = 0}^{r + 1} {m \choose i} \epsilon^i (1-\epsilon)^{m-i}.
		\label{eq:toy_example_8}
	\end{align}
	
	These two examples show that the associated probability of constraint violation for the case where scenarios are discarded can be computed analytically and that the obtained probability is better than the one of Theorem \ref{theo:sampl-disc}. Motivated by this fact we will show a tighter bound on the probability of constraint violation for such scenario programs that is valid for an arbitrary $ d $, as long as the number of discarded scenarios is an integer multiple of $ d $. The only difference in a $ d $-dimensional example would be that the upper limit in the summation would be $ r+d-1  $ (See Theorem \ref{theo:main_result_tight}), which is consistent with \eqref{eq:vio_probability_dist} and \eqref{eq:toy_example_8} and is tighter than the bound of Theorem \ref{theo:sampl-disc}. This is due to the fact that these scenario programs satisfy Assumption \ref{assum:tight_result} (to be defined in the sequel), which is a sufficient condition to obtain such a tight bound.

\section{Proposed discarding scheme and main results} \label{sec:discard_scheme}
In this section, we formalize a removal scheme that results in a better bound on the probability of constraint violation of a scenario program with discarded constraints. For a given set of scenarios $S = \{ \delta_1, \ldots, \delta_{m} \} $, we solve a cascade of $ \ell + 1 $ optimization programs denoted by $ P_k,~ k \in \{ 0, \ldots, \ell \},$ where $ (\ell + 1)d < m  $. For each $k \in \{1,\ldots,\ell\}$, let
\begin{figure*}[ht]
	%\begin{minipage}[t]{2\columnwidth}
	%	\centering
	%	\incfig{cascade_mine_v2}{0.97}
	%\end{minipage}
	\includegraphics[width=\linewidth]{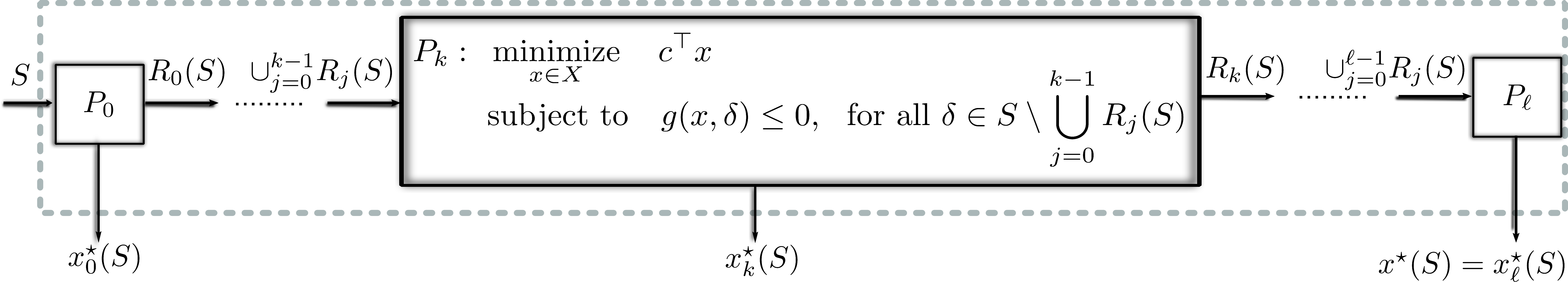}
	\centering
	\caption{Block diagram of the proposed scheme. Given $S = \{ \delta_1, \ldots, \delta_{m} \} $, with $ (\ell + 1)d < m $, we solve a cascade of $ \ell + 1 $ optimization programs denoted by $ P_k,~ k \in \{ 0, \ldots, \ell \} $, and remove $ R_k(S) $ scenarios with $|R_k(S)| = d$ at each stage. In total $ r = \ell d $ scenarios (the ones in $\bigcup_{j=0}^{\ell-1} R_j(S)$) are discarded.
		The set of discarded scenarios depends on the initial set $S$, thus we introduce it as argument of $R_k$.
		If each problem is fully-supported, then $R_k(S)$ corresponds to the (unique) support set associated with the minimizer $x_k^\star(S)$ of that program -- see~\eqref{eq:removed_constraint_fully_supp}; otherwise, $R_k(S)$ contains the support scenarios as well as additional scenarios selected according to a lexicographic order, as in~\eqref{eq:removed_non_deg}. The final solution is denoted by $x^\star(S) = x^\star_{\ell}(S) $. }
	\label{fig:proposed_scheme}
\end{figure*}
\begin{align*}
P_k:~ &\minimise_{x \in X \subset \mathbb{R}^d}  \quad c^\top x \\
&\mathrm{subject~to}  \quad g(x,\delta) \leq 0,~\text{ for all } \delta \in S \setminus \bigcup_{j=0}^{k-1} R_j(S),
\end{align*}
where $ R_k(S) $, with $ |R_k(S)| = d $, represents the set of removed scenarios at stage $ k $, and $ \bigcup_{j = 0}^{k-1} R_j(S) $ the ones that have been removed up to stage $ k $. For $ k = 0 $, we solve problem $ P_0 $ by enforcing all the scenarios in $ S $. Notice that the number of scenarios that have been removed up to stage $ \ell $ is given by $ \ell d $ (the samples in the set $ \bigcup_{j = 0}^{\ell - 1} R_j(S) $) and that, by construction, the collection of removed scenarios is disjoint. The choice of each set of discarded scenarios depends on the initial set $S$, thus we introduce it as an argument in $R_k$. A schematic illustration of the proposed scheme is provided in Figure \ref{fig:proposed_scheme}. Our choice for $R_k(S)$, $k\in \{0,\ldots,\ell-1\}$, will be detailed in the following two subsections.

\begin{defi}[Support set; see Definition 2 in \cite{CG:08}]
	Fix any $k \in \{0,\ldots,\ell\}$ and consider $P_k$. An element of $S \setminus \bigcup_{j=0}^{k-1} R_j(S)$ is a support scenario of $ P_k $, if its removal changes the minimizer $x_k^\star(S)$. The support set of $ x^\star_{k}(S) $, denoted by $ \mathrm{supp}(x^\star_{k}(S)) $, is the collection of support scenarios of $ S\setminus \cup_{j = 0}^{k-1} R_j(S) $.
	\label{def:supp_con}
\end{defi}

\begin{defi}[Fully-supported programs; see Definition 3 in \cite{CG:08}]
	Fix any $k \in \{0,\ldots,\ell\}$ and consider $P_k$. We say that $P_k$ is fully-supported if, for any $S$ with $ |S| = m $ and $m > d$, $|\mathrm{supp}(x_k^\star(S))| = d$ with $\mathbb{P}^m$-probability one.
	\label{def:fully_supp}
\end{defi}

\begin{defi}[Non-degenerate programs; see Assumption 2 in \cite{CG:18}]
	Fix any $k \in \{0,\ldots,\ell\}$ and consider $P_k$. We say that $P_k$ is non-degenerate if, with $\mathbb{P}^m$-probability one, solving the problem by enforcing the constraints only on the support set, $\mathrm{supp}(x_k^\star(S))$, results in $x_k^\star(S)$, i.e., the solution obtained when all samples in $S \setminus \bigcup_{j=0}^{k-1} R_j(S)$ are employed.
	\label{def:non_deg_assump}
\end{defi}

Note that if a problem is fully-supported then it is also non-degenerate, however, the opposite implication does not hold. Moreover, in a convex optimization context, non-degeneracy is a relatively mild assumption, and implies that scenarios give rise to constraints at general positions that do not have accumulation points. On the contrary, requiring a problem to be fully-supported is stronger, however, it exhibits interesting theoretical properties as, with $\mathbb{P}^m$-probability one, the number of support scenarios is exactly equal to $d$~\cite{CG:08,MPL:15}. 

In the sequel, we split our analysis into fully-supported and non-degenerate scenario programs. This facilitates our analysis as our proof construction is laid out better if we analyze the proposed removal scheme assuming that the scenario programs are fully-supported. In Section \ref{sec:discard_scheme_non_deg}, we lift this assumption and show how to extend the developed analysis to the more general case of non-degenerate scenario programs, that are typically encountered in the scenario approach literature.

\subsection{The fully-supported case}
\label{sec:discard_scheme_FS}
In this section we assume that, with $ \mathbb{P}^m $-probability one, the cardinality of the support set of problem $ P_k $, $ k = 0, \ldots, \ell $, is equal to $ d $. We formalize this in the following assumption.
 
\begin{assump}[Fully-supportedness]
	For all $ k \in \{ 0, \ldots, \ell \} $, $ P_k $ is fully-supported with $\mathbb{P}^m$-probability one. 
	\label{assump:fully_supp}
\end{assump}
 
Under Assumption~\ref{assump:fully_supp},  we let
\begin{equation}
R_k(S) = \mathrm{supp}(x^\star_{k}(S)), \quad  k \in \{ 0, \ldots, \ell - 1 \},
\label{eq:removed_constraint_fully_supp}
\end{equation}
 i.e., we remove the support set of the corresponding optimal solution of $ P_k. $ Note that the cardinality of $ R_k(S) $ is equal to $ d $ and this choice for the removed scenarios guarantees that the objective function decreases at each stage, thus improving performance. Moreover, for $ k = \ell $, we denote by $ R_{\ell}(S) $ the support set of $ x^\star_{ \ell }(S) $. Note that $ R_\ell(S) $ does not contain any removed scenarios. 
 
 Under Assumption~\ref{assump:fully_supp}, we obtain a tighter bound than that of Theorem~\ref{theo:sampl-disc}, as shown in the following theorem.

\begin{theo}
	Consider Assumptions \ref{assump:Feas_Uniq} and \ref{assump:fully_supp}. Fix $\epsilon \in (0,1)$, set $r=\ell d$ and let $m > r+d$. Consider also the scenario discarding scheme as encoded by \eqref{eq:removed_constraint_fully_supp} and illustrated in Figure \ref{fig:proposed_scheme}, and let the minimizer of the $\ell$-th program be $x^\star(S) = x_{\ell}^\star(S)$. We then have that
	\begin{align}
	\mathbb{P}^m \bigg\{ (\delta_1,\ldots,\delta_{m}) \in \Delta^m&: \mathbb{P} \big\{ \delta \in \Delta: g(x^\star(S),\delta) > 0 \big\}  > \epsilon  \bigg\} \nonumber  \\ 
	& \leq \sum_{i = 0}^{r+d-1} {m \choose i}\epsilon^i (1-\epsilon)^{m-i}.\label{eq:thm_sampl-disc_new}
	\end{align} 
	\label{theo:main_result_fully_supp}
\end{theo}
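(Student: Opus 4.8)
The plan is to reduce Theorem~\ref{theo:main_result_fully_supp} to an application of Theorem~\ref{theo:Kostas_paper} by constructing an appropriate mapping $\mathcal{A}$ and exhibiting a unique compression set of cardinality $\zeta = r+d = (\ell+1)d$. The natural candidate for the compression set is
\[
C(S) = \bigcup_{j=0}^{\ell} R_j(S),
\]
i.e., the union of the $\ell$ batches of discarded support scenarios together with the support set $R_\ell(S)$ of the final minimizer $x^\star(S)=x^\star_\ell(S)$. Under Assumption~\ref{assump:fully_supp} each $R_k(S)$ has cardinality exactly $d$ with $\mathbb{P}^m$-probability one, so $|C(S)|=(\ell+1)d=r+d$ provided the $\ell+1$ batches are pairwise disjoint; disjointness of $R_0(S),\ldots,R_{\ell-1}(S)$ is immediate since at stage $k$ we remove scenarios from $S\setminus\bigcup_{j=0}^{k-1}R_j(S)$, and $R_\ell(S)\subset S\setminus\bigcup_{j=0}^{\ell-1}R_j(S)$ is disjoint from the earlier ones by the same token.

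\emph{First} I would define the mapping $\mathcal{A}:\Delta^{m}\to 2^\Delta$ that, given any multiset of $(\ell+1)d$ scenarios, partitions them canonically into $\ell+1$ ordered groups of size $d$ (using the lexicographic/indexing order inherited from $S$), replays the cascade $P_0,\ldots,P_\ell$ on these groups exactly as in Figure~\ref{fig:proposed_scheme}, and outputs the feasibility region of the resulting final minimizer, namely $\mathcal{A}(C)=\{\delta\in\Delta: g(x^\star_\ell(C),\delta)\le 0\}$. \emph{Next} I would verify the consistency (compression) property: running the algorithm on $C(S)$ must reproduce the same final solution $x^\star(S)$ as running it on all of $S$, and $x^\star(S)$ must satisfy $g(x^\star(S),\delta)\le 0$ for every $\delta\in S$ — so that $\delta\in\mathcal{A}(C(S))$ for all $\delta\in S$. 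This is where full-supportedness (hence non-degeneracy) does the work: at each stage, solving $P_k$ on only its support set $R_k(S)$ yields the same minimizer $x^\star_k(S)$ as solving it on the full remaining constraint set, and the feasible scenarios not in the support do not affect the optimizer; by iterating this argument one shows that the entire cascade restricted to $C(S)$ recovers $x^\star_\ell(S)$. The consistency then holds because $x^\star_\ell(S)$ is feasible for every constraint it was (or could have been) enforced on, i.e.\ for all of $S$.

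\emph{Then} I would establish uniqueness of the compression set. By Assumption~\ref{assump:Feas_Uniq} and Assumption~\ref{assump:fully_supp}, each $x^\star_k(S)$ is unique and its support set has exactly $d$ elements with probability one; the support set of a convex program is itself unique in this regime (removal of a support scenario strictly changes the minimizer, so the collection is well-defined and minimal), and therefore the successive batches $R_0(S),\ldots,R_\ell(S)$ are uniquely determined by $S$. Hence $C(S)$ is the unique compression set of cardinality $r+d$ for $\mathcal{A}$, with $\mathbb{P}^m$-probability one. Applying Theorem~\ref{theo:Kostas_paper} with $\zeta=r+d$ yields
\[
\mathbb{P}^m\Big\{(\delta_1,\ldots,\delta_m): \mathbb{P}\{\delta:\delta\notin\mathcal{A}(C(S))\}>\epsilon\Big\}=\sum_{i=0}^{r+d-1}\binom{m}{i}\epsilon^i(1-\epsilon)^{m-i},
\]
and since $\{\delta:\delta\notin\mathcal{A}(C(S))\}=\{\delta: g(x^\star(S),\delta)>0\}$, the inequality~\eqref{eq:thm_sampl-disc_new} follows (with equality, in fact, which anticipates the tightness claim of Section~\ref{sec:tight}; the theorem as stated only asserts the bound).

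\emph{The main obstacle} I anticipate is the consistency step — showing rigorously that the cascade replayed on $C(S)$ reconstructs the same sequence of minimizers as the cascade on $S$. One must argue inductively that at stage $k$ the scenarios in $C(S)$ that "belong" to stages $\ge k$ are precisely those not yet removed, that they all lie in the feasible region of $x^\star_k(S)$, and that the support structure is preserved when the non-support feasible constraints of $P_k$ are dropped; a subtlety is ensuring the canonical partitioning inside $\mathcal{A}$ assigns each batch correctly so that the replay is faithful, which is why the lexicographic ordering of indices must be threaded carefully through the construction. A secondary point requiring care is that the measure-zero exceptional sets arising in Assumptions~\ref{assump:Feas_Uniq} and~\ref{assump:fully_supp} at each of the $\ell+1$ stages are only finitely many, so their union is still $\mathbb{P}^m$-null and the "with probability one" statements compose without loss.
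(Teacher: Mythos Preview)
Your proposal has a genuine gap at the consistency step. You claim that ``$x^\star(S)$ must satisfy $g(x^\star(S),\delta)\le 0$ for every $\delta\in S$'', but this is false: the entire purpose of the discarding scheme is that the scenarios in $\bigcup_{j=0}^{\ell-1}R_j(S)$ are (typically) \emph{violated} by $x^\star_\ell(S)$, since removing them strictly decreases the objective. With your mapping $\mathcal{A}(C)=\{\delta: g(x^\star_\ell(C),\delta)\le 0\}$, the set $C(S)$ is therefore \emph{not} a compression, because the removed $\delta$'s need not lie in $\mathcal{A}(C(S))$. The paper handles this by taking a union with $\mathcal{A}_3(C)=\bigcup_{k=0}^{\ell-1}R_k(C)$, so that removed scenarios are absorbed directly into the mapping regardless of feasibility.

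Even after that fix, your uniqueness argument is not sufficient. You argue that $R_0(S),\ldots,R_\ell(S)$ are uniquely determined by $S$, which shows there is a canonical \emph{candidate} $C(S)$; it does not rule out the existence of some other $C'\subset S$, $|C'|=(\ell+1)d$, whose cascade produces a different $x^\star_\ell(C')$ that nevertheless happens to satisfy all scenarios in $S\setminus\{\text{scenarios removed from }C'\}$. To exclude such $C'$, the paper intersects with a further set $\mathcal{A}_2(C)$ encoding cost-dominance conditions at every stage; Proposition~\ref{prop:monotone} then shows that any compression $C'$ must satisfy $x^\star_k(C')=x^\star_k(S)$ for all $k$, which forces $C'=C(S)$. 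Without this extra ingredient the simpler mapping $\mathcal{A}_1\cup\mathcal{A}_3$ only has a unique compression under the stronger Assumption~\ref{assum:tight_result} (this is exactly the content of Theorem~\ref{theo:main_result_tight}). Relatedly, your final claim that equality holds in~\eqref{eq:thm_sampl-disc_new} is incorrect in general: the inequality becomes strict precisely because $\mathcal{A}_2(C)$ shrinks the mapping, and tightness requires Assumption~\ref{assum:tight_result}.
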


The proof of Theorem~\ref{theo:main_result_fully_supp} is deferred to Section~\ref{sec:main_proof_fully_supp}. It is important to note that inequality \eqref{eq:thm_sampl-disc_new} does not involve the combinatorial factor $ {r+d-1 \choose r} $ as in Theorem \ref{theo:sampl-disc}. For a fixed number of scenarios, probability of constraint violation, and confidence level, one is able to satisfy \eqref{eq:thm_sampl-disc_new} with a larger number of removed scenarios compared to \eqref{eq:thm_sampl-disc}. To see this, in Table \ref{tab:comparision}, we show the ratio between the number of removed scenarios obtained from Theorem \ref{theo:main_result_fully_supp} and that from Theorem \ref{theo:sampl-disc} (taken from \cite{CG:11}) for different values of $d$, as the violation level is fixed to $\epsilon = 0.05$ and the number of samples to $m = 40000$. Each entry of Table \ref{tab:comparision} is obtained by equating the right-hand side of the inequalities in both theorems to $\beta = 10^{-6}$, and using bisection to find the allowable number of removed scenarios $r$. The fact that we allow more constraints to be removed using the result of Theorem \ref{theo:main_result_fully_supp} (notice that this number increases with $d$) creates the potential of achieving a better cost, as the resulting problem is less constrained. The latter is, however, problem-dependent as both our removal scheme as well as the procedure adopted in \cite{CG:11} are not optimal. Numerical evidence in Section \ref{sec:exam} quantifies the potential cost improvement with our approach on a resource sharing example.

\begin{table}
	\centering
	\caption{Ratio between the number of removed samples allowed by Theorem \ref{theo:main_result_fully_supp} and Theorem \ref{theo:sampl-disc} for a fixed $\epsilon = 0.05, \beta = 10^{-6}, m = 40000$, and different values of $d$.}
		\label{tab:comparision}
	\begin{tabularx}{0.85\columnwidth}{c||ccccccc}
		$d $        & 10 &60   & 120  & 180  & 240  & 300  & 360    \\[1ex]
		\hline 
		$\mathrm{ratio}$& 1.18 & 1.63 & 2.04 & 2.42 & 2.59 & 3.21 & 3.62  \\     
	\end{tabularx}
\end{table}

To illustrate how the proposed scheme works, we consider the pictorial example of Figure~\ref{fig:pic-example}. Note that $ d = 2$, $ m = 6,$ and we remove $ r = 4,$ thus requiring $ \ell = 2 $ steps of the removal scheme of Figure~\ref{fig:proposed_scheme}. All the problems $ P_k, $ $ k \in \{ 0, 1, 2 \} $, are fully-supported, thus satisfying Assumption~\ref{assump:fully_supp}. The objective function is given by $ c^\top x = x_2 $ and is indicated by the downwards pointing arrow. The corresponding solution for the intermediate problems is illustrated by $ x^\star_k(S) $, for $ k \in \{ 0, 1, 2 \} $, and the support set of each stage by different colour patterns. For instance, the green constraints are the support set, namely, $ \mathrm{supp}(x^\star_0(S)) $, of problem $ P_0 $. The shaded colour under each constraint corresponds to the region of the plane that violates that given constraint, e.g., we notice that $ x^\star_1(S) $ violates both constraints that belong to $ \mathrm{supp}(x^\star_0(S)) $ and satisfies all the remaining ones. The result of Theorem~\ref{theo:main_result_fully_supp} provides guarantees for the probability of violation of $ x^\star_{2}(S) $. Note that the dashed-blue constraint is removed at stage $ 1 $, but it is not violated by the final solution of our scheme.

\begin{figure}[!t]
	\centering
	%	\include[width=\linewidth]{Explanation_2.pdf}
	%\begin{minipage}[t]{\columnwidth}
	%	\incfig{Explanation_2}{0.9}
	%\end{minipage}
	\includegraphics[width=0.8\linewidth]{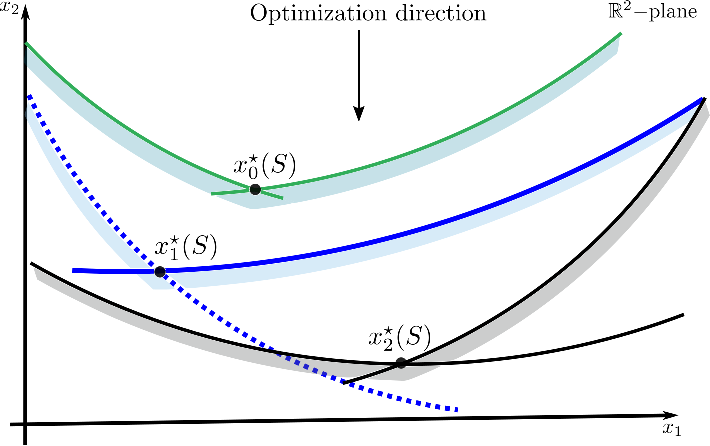}
	\caption{Pictorial example that illustrates the scheme proposed in Figure~\ref{fig:proposed_scheme} for fully-supported problems. In this case, we have that $ d = 2, m = 6, r = 4, $ and $ \ell = 2 $, and all the problems $ P_k, $ $ k \in \{ 0, 1, 2 \} $, satisfy Assumption~\ref{assump:fully_supp}. The objective function is given by $ c^\top x = x_2 $ (indicated by the downwards pointing arrow). The constraint sets are denoted by the different colors patterns: the green constraints are the samples in $ \mathrm{supp}(x^\star_0(S)) $, the blue ones in $ \mathrm{supp}(x^\star_1(S)), $ and the black ones in $ \mathrm{supp}(x^\star_2(S)) $. Note that the dashed-blue constraint is removed by the scheme of Figure~\ref{fig:proposed_scheme}, but it is not violated by $ x^\star_{2}(S) $.} 
	\label{fig:pic-example}
\end{figure}

\subsection{The non-degenerate case}
\label{sec:discard_scheme_non_deg}
In this subsection, we assume that problem $ P_k $, $ k \in \{ 0, \ldots, \ell \} $, is non-degenerate.
\begin{assump}[Non-degeneracy]
	For all $ k \in \{ 0, \ldots, \ell \} $, $ P_k $ is non-degenerate with $\mathbb{P}^m$-probability one. 
	\label{assump:non_deg_assump}
\end{assump}
In case of a non fully-supported problem ($\mathrm{supp}(x_k^\star(S)) <d$, for some $ k \in \{0, \ldots, \ell \} $), we adopt a procedure called regularization, in the same spirit as in~\cite{Cal:10}. This is based on introducing a lexicographic order as a tie-break rule to select which additional scenarios to append to $\mathrm{supp}(x_k^\star(S))$, thus constructing a set of cardinality $d$. 
Note that unless we impose such an order there is no unique choice as all scenarios that are not included in $\mathrm{supp}(x_k^\star(S))$ are not of support, hence their presence leaves the optimal solution unaltered.
To this end, we put a unique linear order on the elements of $S$, i.e., assigning them a distinct numerical label. For each $ k \in \{ 0, \ldots, \ell \} $, let $ \nu_k(S) = d - |\mathrm{supp}(x^\star_{k}(S))| $ and define recursively 
\begin{align}
&Z_k(S) = \bigg \{  \nu_k(S)  \text{ scenarios with the smallest labels in } \nonumber \\
& S \setminus \Big (\bigcup_{j = 0}^{k - 1} \big\{  \mathrm{supp}(x^\star_{j}(S)) \cup Z_j(S)  \big\} \cup \mathrm{supp}(x_k^\star(S))\Big )  \bigg \}, \label{eq:regular}
\end{align}
with $Z_0(S)$ containing the $\nu_0(S)$ smallest elements of $ S\setminus \mathrm{supp}(x_0^\star(S))$ according to the linear order. Note that the set appearing in the the definition of $Z_k(S)$ in \eqref{eq:regular} corresponds to scenarios available at stage $k$ that are not of support.

For each $k \in \{0,\ldots,\ell - 1\}$, we can now define the sets of discarded samples as
\begin{align}
R_k(S) = \mathrm{supp}(x^\star_{k}(S)) \cup Z_k(S). \label{eq:removed_non_deg}
\end{align}
Notice that by construction $|R_k(S)| = d$, while if for any $k \in \{0,\ldots,\ell\}$, $P_k$ is fully-supported, then $R_k(S) = \mathrm{supp}(x^\star_{k}(S))$, i.e., it coincides with the support set of $x^\star_{k}(S)$. Similar as in the fully-supported case, we denote by $ R_{\ell}(S) $ the superset of the support set of $ x^\star_{ \ell }(S) $ obtained by appending, if necessary, $ \nu_{\ell}(S) $ scenarios from the remaining ones. 

\begin{figure}[!t]
	\centering
	\includegraphics[width=0.8\linewidth]{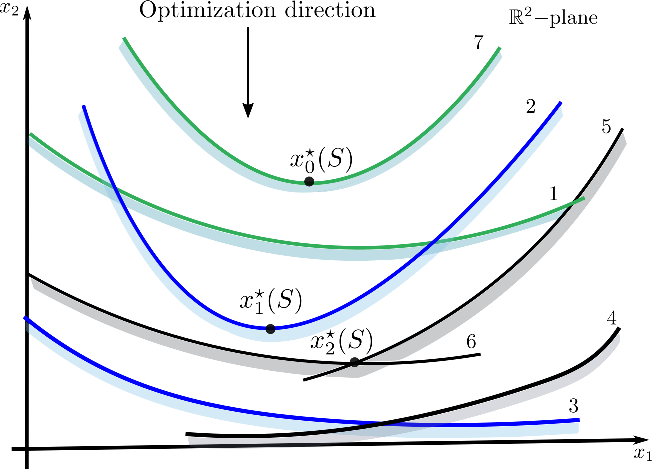}
	%\begin{minipage}[t]{\columnwidth}
	%	\incfig{Explanation_3}{0.85}
	%\end{minipage}
	\caption{Illustration of the scheme in Figure~\ref{fig:proposed_scheme} applied to non-degenerate, but not fully-supported, problems. The intermediate solutions are denoted by $ x^\star_{ k }(S) $, $ k = 0, 1, 2 $. The different colour patterns depict the removed scenarios at each stage. The green constraints are the ones removed at stage $ 0 $, the blue ones those removed at stage $ 1 $. Similar as before, the objective function is given by $ c^\top x =  x_2 $ and this is indicated by the downwards arrow. Observe that the optimal solution, consequently the final solution returned by our scheme, depends on the linear order imposed to the original scenarios.} 
	\label{fig:pic_example_2}
\end{figure}

\begin{remark} \label{rem:regular_cost}
	Consider two arbitrary scenario sets $C \subset C'$, and denote by $x_k^\star(C)$ and $x_k^\star(C')$ the minimizers of $P_k$ with $C$ and $C'$, respectively, replacing $S$.  Moreover, define $Z_k(C)$ and $Z_k(C')$ as in \eqref{eq:regular} with $C$ and $C'$, respectively, in place of $S$. We then have that $ F_k(x_k^\star(C)) < F_k(x_k^\star(C')) $ if: either $ c^\top x_k^\star(C) < c^\top x_k^\star(C') $; or $ c^\top x_k^\star(C) = c^\top x_k^\star(C') $ and, at the first element that $Z_k(C)$ and $Z_k(C')$ differ, the corresponding label of $Z_k(C)$ is strictly lower with respect to the imposed lexicographic order than the one of 
	$Z_k(C')$. Regularization is thus a way to select among subsets of scenarios that would otherwise yield the same objective value. We will use this procedure in Section~\ref{sec:main_result_non_deg} to prove Theorem~\ref{theo:main_result_non_deg} below.	It is shown in \cite{Cal:10}, that $P_k$ with its objective function replaced by $F_k(x) = \big ( c^\top x, Z_k(S) \big )$ is a fully-supported program, and the constructed set $R_k(S)$ in~\eqref{eq:removed_non_deg} forms its unique support set with cardinality $d$.
\end{remark}

We are now in position to state the main result related to non-degenerate problems.

\begin{theo}
	Consider Assumptions \ref{assump:Feas_Uniq} and \ref{assump:non_deg_assump}. Fix $\epsilon \in (0,1)$, set $r=\ell d$ and let $m > r+d$. Consider also the scenario discarding scheme as encoded by \eqref{eq:removed_non_deg} and illustrated in Figure \ref{fig:proposed_scheme}, and let the minimizer of the $\ell$-th program be $x^\star(S) = x_{\ell}^\star(S)$. We then have that
	\begin{align}
	\mathbb{P}^m \bigg\{ (\delta_1,\ldots,\delta_{m}) \in \Delta^m&: \mathbb{P} \big\{ \delta \in \Delta: g(x^\star(S),\delta) > 0 \big\}  > \epsilon  \bigg\} \nonumber  \\ 
	& \leq \sum_{i = 0}^{r+d-1} {m \choose i}\epsilon^i (1-\epsilon)^{m-i}.\label{eq:thm_sampl-disc_new_2}
	\end{align} 
	\label{theo:main_result_non_deg}
\end{theo}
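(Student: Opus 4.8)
\emph{Proof idea.} The plan is to reduce Theorem~\ref{theo:main_result_non_deg} to the fully-supported case, Theorem~\ref{theo:main_result_fully_supp}, through the regularization device recalled in Remark~\ref{rem:regular_cost}. Under Assumption~\ref{assump:non_deg_assump}, whenever a stage $P_k$ of the cascade fails to be fully-supported we replace its objective by the lexicographic objective $F_k$ of~\eqref{eq:augm_cost}; by Calafiore's regularization lemma (Remark~\ref{rem:regular_cost} and~\cite{Cal:10}) every stage of the resulting cascade is then fully-supported, and its \emph{unique} support set of cardinality $d$ is exactly the discarded set $R_k(S)$ of~\eqref{eq:removed_non_deg}. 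Crucially, under Assumption~\ref{assump:Feas_Uniq} the $x$-minimizer of $P_k$ is already unique, so the lexicographic refinement does not move the decision variable: it only selects, in a unique and deterministic fashion driven by the fixed linear order on $S$, the $\nu_k(S)$ extra scenarios appended to $\mathrm{supp}(x_k^\star(S))$. Hence the regularized cascade returns the same final minimizer $x^\star(S)=x_\ell^\star(S)$ as the original one, and it suffices to prove the bound for the regularized cascade.

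For that cascade, Assumptions~\ref{assump:Feas_Uniq} and~\ref{assump:fully_supp} both hold -- the first because the $x$-minimizer is unique and the sets $Z_k(S)$ of~\eqref{eq:regular} are uniquely determined by the linear order, the second by Remark~\ref{rem:regular_cost} -- and the removal rule~\eqref{eq:removed_non_deg} is precisely ``discard the unique support set'', i.e.\ rule~\eqref{eq:removed_constraint_fully_supp} applied to the regularized programs. I would then run the argument that proves Theorem~\ref{theo:main_result_fully_supp}: put $C=\bigcup_{j=0}^{\ell}R_j(S)$, which by construction consists of $\ell+1$ pairwise disjoint blocks of cardinality $d$, so $|C|=(\ell+1)d=r+d<m$; introduce a mapping $\mathcal{A}$ that runs the $(\ell+1)$-stage regularized cascade on its input and returns the feasibility region $\{\delta\in\Delta: g(\hat x,\delta)\le 0\}$ of the resulting final minimizer $\hat x$ -- enlarged by the ($\mathbb{P}$-null) finite collection of scenarios it was fed, so as to absorb the discarded scenarios that $\hat x$ may violate; show that $C$ is the \emph{unique} compression set of cardinality $r+d$ for $\mathcal{A}$; and conclude with Theorem~\ref{theo:Kostas_paper} applied with $\zeta=r+d$, which gives $\mathbb{P}^m\{(\delta_1,\ldots,\delta_m): \mathbb{P}\{\delta\notin\mathcal{A}(C)\}>\epsilon\}=\sum_{i=0}^{r+d-1}{m \choose i}\epsilon^i(1-\epsilon)^{m-i}$. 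Since with $\mathbb{P}^m$-probability one $\hat x=x^\star(S)$ and $\{\delta: g(x^\star(S),\delta)>0\}\subseteq\Delta\setminus\mathcal{A}(C)$, the bound~\eqref{eq:thm_sampl-disc_new_2} follows. As in Theorem~\ref{theo:main_result_fully_supp}, and unlike Theorem~\ref{theo:sampl-disc}, nothing here forces the discarded scenarios to be violated by $x^\star(S)$.

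The main obstacle is the pair of claims that $C$ is a compression set and that it is the \emph{unique} one of cardinality $r+d$ -- this is the technical heart shared with the proof of Theorem~\ref{theo:main_result_fully_supp}. For consistency I would show, by induction on the stage $k$, that running the regularized cascade on $C$ alone reproduces the whole trajectory obtained on $S$: all intermediate minimizers, all support sets, and all regularization sets $Z_k$, and hence $\hat x=x^\star(S)$. Each inductive step uses the non-degeneracy of $P_k$ (so that dropping the non-support scenarios does not perturb the minimizer), the monotonicity of $F_k$ under addition of constraints together with uniqueness of the lexicographic minimizer (Remark~\ref{rem:regular_cost}), and the fact that the smallest-label rule in~\eqref{eq:regular} makes each $Z_k$ stable when one passes from $S$ to the subcollection $C$ (a smaller-label scenario can never be overlooked, since $C$ already contains every $Z_j(S)$). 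Uniqueness of the compression set then follows because, after regularization, each stage is fully-supported -- so its support set is unique -- while the linear order eliminates any residual freedom in the appended scenarios; any $(r+d)$-element compression set must therefore equal $C$. The regularization lemma itself (that $(P_k,F_k)$ is fully-supported with unique support set $R_k(S)$) is taken as given from~\cite{Cal:10}.
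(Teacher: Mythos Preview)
Your high-level plan---regularize each stage via the lexicographic cost $F_k$ so that the cascade becomes fully-supported, identify $C=\bigcup_{j=0}^{\ell}R_j(S)$, and then invoke the compression result of Theorem~\ref{theo:Kostas_paper}---is exactly the route the paper takes. Your existence argument (induction showing that the cascade fed with $C$ reproduces every $x_k^\star(S)$ and every $Z_k(S)$) is also essentially what the paper proves in Proposition~\ref{prop:non_deg_case}(i).

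The gap is in your choice of mapping and, consequently, in your uniqueness argument. You define $\mathcal{A}(C)$ as the feasibility region of the terminal minimizer enlarged by the finitely many input scenarios, i.e.\ essentially $\{\delta: g(x_\ell^\star(C),\delta)\le 0\}\cup C$. With this mapping, ``$C'$ is a compression'' only says that $x_\ell^\star(C')$ is feasible for every $\delta\in S\setminus C'$; it encodes nothing about the intermediate stages. Your uniqueness paragraph (``each stage is fully-supported, so its support set is unique, hence any $(r+d)$-element compression must equal $C$'') conflates uniqueness of the support set of the cascade run on $S$ with uniqueness of the compression set of $\mathcal{A}$: a different $C'$ runs its own cascade, along a different trajectory, and nothing in your mapping rules out that its terminal minimizer nevertheless satisfies $S\setminus C'$. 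In fact, the paper uses precisely this simplified mapping, denoted $\bar{\mathcal{A}}$ in~\eqref{eq:def_mapping_tight_case}, only in Section~\ref{sec:tight}, and there it needs the extra structural Assumption~\ref{assum:tight_result} to force uniqueness; under Assumptions~\ref{assump:Feas_Uniq} and~\ref{assump:non_deg_assump} alone that argument does not go through.

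The paper's fix is to intersect with an additional set $\mathcal{A}_2(C)$ (see~\eqref{eq:main_algo_def} and its $F_k$-variant~\eqref{eq:main_algo_def_mod}) that records, for every stage $k$ and every $(d-1)$-subset $J$ of the surviving scenarios, the cost-dominance condition $F_k(z^\star(J\cup\{\delta\}))\le F_k(x_k^\star(C))$. This extra piece is what makes ``$C'$ is a compression'' force $x_k^\star(C')=x_k^\star(S)$ and $Z_k(C')=Z_k(S)$ for \emph{every} $k$ (this is the necessity direction of Proposition~\ref{prop:monotone} in the fully-supported case, and the Case~I/Case~II analysis in Section~\ref{sec:main_result_non_deg} for the non-degenerate case). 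Once you add $\mathcal{A}_2(C)$, the final link to the violation probability still works because $\mathcal{A}_1\cap\mathcal{A}_2\subset\mathcal{A}_1$, so the inequality only goes in the right direction. In short: keep your reduction, but replace your mapping by the paper's $(\mathcal{A}_1\cap\mathcal{A}_2)\cup\mathcal{A}_3$ with the lexicographic $F_k$, and then redo uniqueness along the lines of Proposition~\ref{prop:monotone}/Proposition~\ref{prop:non_deg_case} rather than the one-sentence sketch you give.
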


Theorem \ref{theo:main_result_non_deg} holds for non-degenerate scenario programs, thus being more general than Theorem \ref{theo:main_result_fully_supp}, which is only valid for fully-supported problems. This generalization comes at the expense of a (possible) decrease in performance, as we append additional scenarios to compose the support set of the regularized problem that may not improve the objective value associated to $ x^\star(S) $. However, similar to Theorem \ref{theo:main_result_fully_supp} we may still improve the cost with respect to other removal strategies, as we are allowed to remove more constraints compared to \cite{CG:11}.
It is also important to notice that \eqref{eq:thm_sampl-disc_new_2} holds for any linear order imposed in the original samples $ S $ and that the resulting optimal objective value of the scheme depends on such ordering. In this paper we only provide feasibility guarantees and not optimality. Note that this is also the case for the results in~\cite{CG:11,Cal:10}. The only available results for the optimal cost are given in~\cite{CG:11} when the removal scheme is the optimal one, which is, however, of combinatorial complexity. As a final remark, observe that Theorem \ref{theo:main_result_fully_supp} constitutes the main development towards obtaining Theorem \ref{theo:main_result_non_deg}, as should be apparent in the next section.

\begin{remark} \label{rem:viol_nondeg}
	It should be noted that the assumption in \cite{CG:11,Cal:10} appearing in the statement of Theorem \ref{theo:sampl-disc}, that requires all discarded scenarios to be violated by the final solution with $\mathbb{P}^m$-probability one, has some non-degeneracy implications for all intermediate problems. To see this, notice that if we allow for degenerate problems, then situations where all scenarios are identical are admissible and may happen with non-zero probability (allowing for atomic masses). Clearly, in such cases there is no scenario that can be discarded while being violated by the resulting solution which remains unaltered.
	Therefore, we tighten the bound in Theorem~\ref{theo:main_result_non_deg}, without strengthening the assumptions in \cite{CG:11,Cal:10}. 
\end{remark}

To clarify how the scheme presented in Figure~\ref{fig:proposed_scheme} works when applied to non-degenerate problems, consider the example depicted in Figure~\ref{fig:pic_example_2}. Similar as before, we have $d = 2, m = 7,$ and want to remove $ 4 $ constraints, i.e., $ r = 4$. As opposed to Figure~\ref{fig:pic-example}, however, note that the constraints are enumerated according to an arbitrary order, which is used to compose the sets $ Z_k(S) $, $ k \in \{ 0, 1,2 \} $, as described by equation~\eqref{eq:regular}. Moreover, problems $ P_0 $ and $ P_1 $ are not fully-supported, as the number of support scenarios is equal to one in each of these cases. Our scheme first removes the scenario that supports the solution $ x^\star_{0}(S) $ and the one labeled as $ 1 $, since it is the scenario with the smallest order among the remaining ones. These scenarios are depicted as green in Figure~\ref{fig:pic_example_2}. Then, we solve problem $ P_1 $ with the resulting scenarios, obtaining $ x^\star_1(S) $ as an intermediate solution and scenarios labeled as $ 2 $ and $ 3 $ to be removed. The former constraint is removed as it is in the support set of $ x^\star_1(S) $, and the latter as it is the sample with the smallest index from the remaining ones. Finally, the solution provided by the scheme, and whose guarantees are given in Theorem~\ref{theo:main_result_non_deg}, is denoted by $ x^\star_{2}(S) $.

 \section{Proof of the main Results}
 \label{sec:proof_main_results}

%To show \eqref{eq:prob_ineq3}, we will show that $C$ in \eqref{eq:compression_candidate} is the unique compression set for $\mathcal{A}$. By Theorem \ref{theo:Kostas_paper} with $\zeta = r+d = (\ell+1)d$, we then obtain \eqref{eq:prob_ineq3}.
%We show this first for the class of fully supported problems, and then relax this requirement to non-degeneracy and employ regularization.

\subsection{The fully-supported case} \label{sec:main_proof_fully_supp}

Throughout this subsection, we consider Assumption~\ref{assump:fully_supp}. Let $m > (\ell+1)d$, and consider any set $C \subset S$, with $|C|=(\ell+1)d$.
We consider the proposed scheme of Figure~\ref{fig:proposed_scheme}, fed by $ C $ rather than $ S $. All quantities introduced in Section \ref{sec:discard_scheme} depending on $S$ would now depend on $C$ instead. 
% The set $C$ is a collection of the support scenarios of each problem in the proposed scheme, where according to~\eqref{eq:removed_constraint_fully_supp} is given by $R_k(C) = \mathrm{supp}(x^\star_{k}(C))$ scenarios are discarded.
For a given set of indices $ I \subset C $, we define  
\begin{align}
z^\star(I)=&\argmin_{x \in X}  \quad c^\top x \nonumber \\
&\mathrm{subject~to}  \quad g(x,\delta) \leq 0,~\text{for all}~\delta \in I. \label{eq:z_def}
\end{align}
Recall that $x_k^\star(C)$ denotes the minimizer of $P_k$ which in turn is based on the samples in 
$C \setminus \cup_{j = 0}^{k-1} R_j(C)$, i.e., the ones that have not been removed up to stage $k$ of the proposed scheme. It thus holds that $x_k^\star(C) = z^\star(C \setminus \cup_{j = 0}^{k-1} R_j(C))$ -- note that the argument of $ z^\star $ in this case depends on $ k $, $ k \in \{0,\ldots,\ell\} $. Recall also that, under Assumption~\ref{assump:fully_supp}, we have $ R_k(C) = \mathrm{supp}(x^\star_k(C)) $.

Since we will be invoking the framework introduced in Section~\ref{sec:learning}, let
\begin{align*}
	\mathcal{A}_1(C) &=  \big\{ \delta \in \Delta: g(x^\star_{\ell}(C), \delta) \leq 0 \big\}, \nonumber \\ 
\mathcal{A}_2(C) &=  \bigcap_{k = 0}^{\ell} \big\{ \delta \in \Delta: c^\top z^\star(J \cup \{\delta \}) \leq c^\top x^\star_{k}(C), \nonumber \\
&~~~~\text{for all } J \subset C \setminus \cup_{j = 0}^{k-1} R_j(C), \text{ with } |J| = d-1 \big\}, \nonumber \\
\mathcal{A}_3(C) &=   \bigcup_{k = 0}^{\ell-1} R_k(C), 
\end{align*}
and define the mapping $ \mathcal{A}: \Delta^m \rightarrow 2^\Delta $, with $ \zeta = (\ell+1)d $, as
\begin{align} 
\mathcal{A}&(C)= \Big( \mathcal{A}_1(C) \cap \mathcal{A}_2(C)\Big)  \cup \mathcal{A}_3(C).
\label{eq:main_algo_def}
\end{align}

The main motivation to define the mapping in~\eqref{eq:main_algo_def} is the fact that its probability of violation will be shown to upper bound that of $ \{ \delta \in \Delta: g(x^\star_{ \ell }(C), \delta) \leq 0 \} $, which is ultimately the quantity we are interested in (as shown in Section~\ref{sec:main_proof_fully_supp}, step~$ 3 $). 

Note that $ \mathcal{A}(C) $ comprises three sets:

\begin{enumerate}
	\item[$ i) $] The set $\mathcal{A}_1(C)$ contains all realizations of $\delta$ for which the final decision of our proposed scheme $x_\ell^\star(C) = x^\star(C)$ remains feasible. This is the set whose probability of occurrence we are ultimately interested to bound. 
	
	\item[$ ii) $] The set $\mathcal{A}_2(C)$, which is composed by the intersection of $\ell+1$ sets indexed by $k\in \{0,\ldots,\ell\}$, contains the realizations of $\delta$ such that, for all subsets of cardinality $ d -1 $ from the remaining samples at stage $ k $, the cost $c^\top z^\star(J \cup \{\delta \})$ is lower than or equal to $c^\top x^\star_{k}(C)$. The former cost corresponds to appending $\delta$ to any set $J$ of $d-1$ scenarios from $C \setminus \cup_{j = 0}^{k-1} R_j(C)$, while the latter corresponds to the cost of the minimizer $x_k^\star(C)$ of $P_k$. Informally, this inequality is of similar nature with that of $ \mathcal{A}_1(C) $, however, rather than considering constraint satisfaction it only involves some cost dominance condition for each of the interim and the final optimal solutions. The motivation to use this representation rather than constraint satisfaction conditions stems from the fact that in Section~\ref{sec:discard_scheme_non_deg} we will be appending a lexicographic order to the cost so that we break the tie among multiple compression sets. Besides, these sets carry information about the path taken by the proposed scheme, which is to be understood, in this context, as the sequence $ (x_k(C))_{k = 0}^\ell $. 
	
	\item[$ iii) $] The set $\mathcal{A}_3(C)$ includes all scenarios that are removed by the discarding scheme. Implicit in the definition of mapping~\eqref{eq:main_algo_def} is the fact that, for any compression set $ C $, all samples that are not removed in the intermediate stages must be contained in the set $ \mathcal{A}_1(C) \cap \mathcal{A}_2(C) $. This fact will be crucial in the following arguments. 
\end{enumerate}

The following proposition establishes a basic property of any compression associated to the mapping~\eqref{eq:main_algo_def}, and is instrumental for the proof of our main theorem.

\begin{prop} \label{prop:monotone}
	Consider Assumptions \ref{assump:Feas_Uniq} and \ref{assump:fully_supp}. Set $r=\ell d$ and let $m > (\ell+1)d$. We have that $ C \subset S$ is a compression set for $\mathcal{A}(C)$ in \eqref{eq:main_algo_def} if and only if, with $\mathbb{P}^m$-probability one, for all $k \in \{0,\ldots,\ell\}$,
	\begin{align}
	x^\star_{k}(C) = x^\star_{k}(S). \label{eq:opt_equal}
	\end{align} 
\end{prop}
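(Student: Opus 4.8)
The plan is to establish the two implications separately, exploiting the monotonicity of the cost under constraint enlargement together with Assumption~\ref{assump:fully_supp}. First I would prove the ``if'' direction: assume $x_k^\star(C) = x_k^\star(S)$ for all $k \in \{0,\ldots,\ell\}$ and show $C$ is a compression set for $\mathcal{A}(C)$, i.e., $\delta \in \mathcal{A}(C)$ for every $\delta \in S$. Fix $\delta \in S$. If $\delta \in \bigcup_{j=0}^{\ell-1} R_j(C) = \mathcal{A}_3(C)$ we are done immediately, so assume $\delta$ survives all removal stages of the scheme run on $C$; I would then argue it also survives on $S$, since by \eqref{eq:opt_equal} and Assumption~\ref{assump:fully_supp} the support sets coincide, $R_k(C) = \mathrm{supp}(x_k^\star(C)) = \mathrm{supp}(x_k^\star(S)) = R_k(S)$, so $\delta \in S \setminus \bigcup_{j=0}^{\ell-1} R_j(S)$, hence $\delta$ is a constraint of $P_\ell$ on $S$ and therefore $g(x_\ell^\star(S),\delta) \le 0$; since $x_\ell^\star(S) = x_\ell^\star(C)$ this gives $\delta \in \mathcal{A}_1(C)$. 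For membership in $\mathcal{A}_2(C)$, fix any $k$ and any $J \subset C \setminus \bigcup_{j=0}^{k-1} R_j(C)$ with $|J| = d-1$; then $J \cup \{\delta\}$ is a subset of the constraints active at stage $k$ (using $R_j(C)=R_j(S)$ again so that $\delta \notin \bigcup_{j=0}^{k-1} R_j(C)$), and since enforcing fewer constraints can only decrease the optimal cost, $c^\top z^\star(J \cup \{\delta\}) \le c^\top z^\star(C \setminus \bigcup_{j=0}^{k-1} R_j(C)) = c^\top x_k^\star(C)$. Thus $\delta \in \mathcal{A}_2(C)$, and combining, $\delta \in (\mathcal{A}_1(C) \cap \mathcal{A}_2(C)) \cup \mathcal{A}_3(C) = \mathcal{A}(C)$.

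For the ``only if'' direction, assume $C$ is a compression set and prove \eqref{eq:opt_equal} by induction on $k$. The inductive claim I would carry is the stronger statement that the path agrees: $x_j^\star(C) = x_j^\star(S)$ and $R_j(C) = R_j(S)$ for all $j \le k$. For the step, given agreement up to $k-1$, the remaining samples at stage $k$ are $S \setminus \bigcup_{j=0}^{k-1} R_j(S)$ versus $C \setminus \bigcup_{j=0}^{k-1} R_j(C)$, and the latter is a subset of the former. The minimizer over the larger set is $x_k^\star(S)$ and over the smaller set is $x_k^\star(C)$; a priori $c^\top x_k^\star(S) \ge c^\top x_k^\star(C)$. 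The key is to show equality. Because $C$ is a compression set, every $\delta \in S$ lies in $\mathcal{A}(C)$; in particular every $\delta \in \mathrm{supp}(x_k^\star(S))$ that is \emph{not} among the removed scenarios $\bigcup_{j=0}^{k-1}R_j(C)$ satisfies the $\mathcal{A}_2(C)$ condition at index $k$. Here I would use non-degeneracy/full-supportedness: $\mathrm{supp}(x_k^\star(S))$ has cardinality exactly $d$ and re-solving on this support set alone reproduces $x_k^\star(S)$. Picking any one scenario $\delta_0 \in \mathrm{supp}(x_k^\star(S))$ and letting $J$ be the other $d-1$ support scenarios, the $\mathcal{A}_2$ inequality gives $c^\top z^\star(J \cup \{\delta_0\}) \le c^\top x_k^\star(C)$; but $z^\star(J \cup \{\delta_0\}) = x_k^\star(S)$ by full-supportedness applied on $S$, so $c^\top x_k^\star(S) \le c^\top x_k^\star(C)$. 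Combined with the reverse inequality and Assumption~\ref{assump:Feas_Uniq} (uniqueness), $x_k^\star(C) = x_k^\star(S)$; then $R_k(C) = \mathrm{supp}(x_k^\star(C)) = \mathrm{supp}(x_k^\star(S)) = R_k(S)$, closing the induction. One point of care: to invoke $J \cup \{\delta_0\} \subset C \setminus \bigcup_{j=0}^{k-1} R_j(C)$ I must know the $d$ support scenarios of $x_k^\star(S)$ all belong to $C$ and none were removed earlier; the former holds because support scenarios, being constraints that affect the optimum, must lie in $C$ for $C$ to be a compression set (else running the scheme on $C$ could not reproduce the constraint), and the latter follows from the inductive agreement $R_j(C)=R_j(S)$ together with the fact that the support of $x_k^\star(S)$ consists of constraints active in $P_k$ on $S$, hence not in $\bigcup_{j=0}^{k-1}R_j(S)$.

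The main obstacle I anticipate is the base/step bookkeeping in the ``only if'' direction: ensuring that the $d-1$-element set $J$ appearing in $\mathcal{A}_2(C)$ can always be chosen to be the complement-within-the-support of a distinguished scenario $\delta_0$, and that this $J$ together with $\delta_0$ genuinely lies in the stage-$k$ constraint set both for $C$ and for $S$. This is exactly where full-supportedness (cardinality of support $= d$) and non-degeneracy (re-solving on the support recovers the minimizer) are indispensable, and where the ``path information'' encoded in $\mathcal{A}_2(C)$ does its work; without the $\mathcal{A}_2$ clauses one could only conclude feasibility of $x_\ell^\star(C)$ for samples in $S$, not the stage-by-stage agreement of the whole cascade. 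A secondary subtlety is handling scenarios $\delta \in S$ that happen to be removed in the $C$-run but are support scenarios in the $S$-run (or vice versa) before the induction is complete; the stronger inductive hypothesis $R_j(C) = R_j(S)$ for $j < k$ is what rules this out.
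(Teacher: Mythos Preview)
Your ``if'' direction is essentially correct and matches the paper's argument (the paper phrases it as $\mathcal{A}_1(C)=\mathcal{A}_1(S)$, $\mathcal{A}_3(C)=\mathcal{A}_3(S)$, and $\mathcal{A}_2(S)\subset\mathcal{A}_2(C)$, but the content is the same). One minor slip: when you write $c^\top z^\star(J\cup\{\delta\})\le c^\top z^\star\big(C\setminus\bigcup_{j=0}^{k-1}R_j(C)\big)$, the inclusion $J\cup\{\delta\}\subset C\setminus\bigcup R_j(C)$ can fail since $\delta$ need not lie in $C$; compare instead against $S\setminus\bigcup R_j(S)$ and then use $x_k^\star(S)=x_k^\star(C)$.

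The ``only if'' direction, however, has a genuine gap. To invoke the $\mathcal{A}_2(C)$ inequality at stage $k$ with $\delta_0\in\mathrm{supp}(x_k^\star(S))$ and $J=\mathrm{supp}(x_k^\star(S))\setminus\{\delta_0\}$, you need $J\subset C\setminus\bigcup_{j=0}^{k-1}R_j(C)$, hence in particular $J\subset C$. Your justification that the support scenarios of $x_k^\star(S)$ must lie in $C$ because ``running the scheme on $C$ could not reproduce the constraint'' is circular: that the scheme on $C$ reproduces the one on $S$ is exactly what you are proving. Nothing in the definition of compression forces $\mathrm{supp}(x_k^\star(S))\subset C$ \emph{a priori}; if two or more of these support scenarios lie outside $C$, no admissible $J$ exists and the argument stalls. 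There is also a second issue: even if $\delta_0\in C$, compression only gives $\delta_0\in\mathcal{A}(C)$, and since $\delta_0$ may belong to $R_j(C)$ for some $j\ge k$ (your inductive hypothesis controls only $j<k$), you cannot conclude $\delta_0\in\mathcal{A}_2(C)$.

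The paper sidesteps both issues by a one-at-a-time device: it does not compare $C$ against $S$ directly, but picks any $\bar{\delta}\in S\setminus C$ and shows by contradiction that $x_k^\star(C)=x_k^\star(C\cup\{\bar{\delta}\})$ for all $k$. If this fails at the minimal index $\bar{k}$, then $\bar{\delta}\in\mathrm{supp}\big(x_{\bar{k}}^\star(C\cup\{\bar{\delta}\})\big)$, and the remaining $d-1$ support scenarios of $x_{\bar{k}}^\star(C\cup\{\bar{\delta}\})$ are automatically in $C$ (there is only one new element), so $\bar{J}=\mathrm{supp}\big(x_{\bar{k}}^\star(C\cup\{\bar{\delta}\})\big)\setminus\{\bar{\delta}\}$ is a valid choice in $C\setminus\bigcup_{j=0}^{\bar{k}-1}R_j(C)$. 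Furthermore, $\bar{\delta}\notin C$ forces $\bar{\delta}\notin\mathcal{A}_3(C)$, hence $\bar{\delta}\in\mathcal{A}_2(C)$, which contradicts $c^\top x_{\bar{k}}^\star(C)<c^\top z^\star(\bar{J}\cup\{\bar{\delta}\})$. Iterating over $S\setminus C$ then yields $x_k^\star(C)=x_k^\star(S)$. This incremental trick is precisely what is missing from your plan.
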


\begin{proof}
	We first show necessity. Suppose that $ C $ is a compression set but, for the sake of contradiction, we have that there exists $ k \in \{ 0, \ldots, \ell \} $ and $ \bar{\delta} \in S \setminus C $ such that 
	\begin{equation}
	x_k^\star(C) \neq x^\star_{k}(C \cup \{\bar{\delta}\}).
	\label{eq:prop_theo1_1}
	\end{equation}
	Let $ \bar{k} $ be the minimum index such that~\eqref{eq:prop_theo1_1} holds, while we have that 	 
	$x^\star_j(C) = x^\star_{j}(C \cup \{ \bar{\delta} \})$, for all $j < \bar{k}$. 
	
	By Assumption~\ref{assump:fully_supp}, with $\mathbb{P}^m$-probability one, the last statement implies that $ \mathrm{supp}(x^\star_{j}(C)) = \mathrm{supp}(x^\star_{j}(C\cup\{\bar{\delta}\})) $, for all $j < \bar{k} $, as the support set of each optimal solution is unique. Hence, $R_j(C) = R_j(C \cup \{\bar{\delta}\})$ for all $j<\bar{k}$, and $R_j(C) = \mathrm{supp}(x_j^\star(C))$ for fully-supported problems (similarly for $R_j(C \cup \{\bar{\delta}\})$). By \eqref{eq:z_def}, we then have
	\begin{align}
	x_{\bar{k}}^{\star} (C) &= z^\star (C \setminus \cup_{j=0}^{\bar{k}-1} R_j(C)), \label{eq:bar_k_x_star_def} \\
	x^\star_{\bar{k}}(C\cup\{\bar{\delta}\}) &= z^\star ((C \setminus \cup_{j=0}^{\bar{k}-1} R_j(C)) \cup \{\bar{\delta}\}). \label{eq:bar_k_x_star_delta_def}
	\end{align}
	Since the right-hand side of \eqref{eq:bar_k_x_star_delta_def} involves one more scenario with respect to the right-hand side of \eqref{eq:bar_k_x_star_def}, the feasible set of~\eqref{eq:bar_k_x_star_delta_def} is a subset set of the one of~\eqref{eq:bar_k_x_star_def}. Moreover, by the fact that $ x^\star_{\bar{k}}(C \cup \{\bar{\delta}\}) \neq x^\star_{\bar{k}}(C)$ and Assumption~\ref{assump:Feas_Uniq}, we get 
	\begin{equation}
	c^\top x^\star_{ \bar{k} } (C) < c^\top x^\star_{ \bar{k} }(C \cup \{\bar{\delta}\}). \label{eq:cost_decrease}
	\end{equation}
	
	Notice that $\bar{\delta}$ belongs to the support set of $x^\star_{ \bar{k} }(C \cup \{\bar{\delta}\})$, as its removal results in a different optimal solution with lower cost in \eqref{eq:cost_decrease}. 
	In other words, there exists $ \bar{J} \subset C \setminus \cup_{j=0}^{\bar{k}-1} R_j(C)$ (in fact, $ \bar{J} = \mathrm{supp}(x^\star_{ \bar{k} }(C\cup\{\bar{\delta}\})) \setminus \{
	\bar{\delta}\}$) of cardinality $ d-1 $ such that by \eqref{eq:bar_k_x_star_delta_def}, we have that
	\begin{equation}
	c^\top x^\star_{ \bar{k} }(C) < c^\top x^\star_{ \bar{k} }(C \cup \{\bar{\delta}\}) = c^\top z^\star(\bar{J} \cup \{\bar{\delta}\}).
	\label{eq:intermediate_res_prop_1}
	\end{equation}
	
	At the same time, $C$ is assumed to be a compression set. Since $ \bar{\delta} \notin C $, then  $\bar{\delta} \notin \cup_{k = 0}^{\ell-1} R_k(C) = \mathcal{A}_3(C)$, as $\cup_{k=0}^{\ell-1} R_k(C) \subset C$. As a result, $\bar{\delta}$ will give rise to a constraint in $P_{\ell}$, hence $\bar{\delta} \in \mathcal{A}_2(C)$, which in turn implies that 
	for all $ J \subset C \setminus \cup_{j = 0}^{\ell-1} R_j(C) $ with $ |J| = d-1 $, and for all $ k \leq \ell $,
	\begin{equation}
	c^\top z^\star(J \cup \{ \bar{\delta} \}) \leq c^\top x^\star(C) \leq c^\top x^\star_{k}(C), \label{eq:compr_cost}
	\end{equation}
	where the first inequality follows from the fact that $c^\top x^\star(C)$ is the optimal value for $P_\ell$, and $x^\star(C)=x_{\ell}^\star(C)$ by construction satisfies all constraints with scenarios in $J \cup \{ \bar{\delta}\}$. The second inequality
	follows from the fact that $k\leq \ell$, and the cost deteriorates as $k$ increases.
	Setting $k=\bar{k}$ and $ J = \bar{J} $ in \eqref{eq:compr_cost} establishes a contradiction with \eqref{eq:intermediate_res_prop_1}, thus showing that $x_k^\star(C) = x_k^\star(C\cup \{\delta\})$, for any $\delta \in S\setminus C$, and any $k\in \{0,\ldots,\ell\}$. Inductively, adding one by one each element in $ S \setminus C $, we can show that $x_k^\star(C) = x_k^\star(S)$, for any $k\in \{0,\ldots,\ell\}$, thus concluding the necessity part of the proof.
	
	We now show sufficiency. Let $ C \subset S $ be such that $ x^\star_{k}(C) = x^\star_{k}(S) $ for all $ k \in \{0, \ldots, \ell\} $. We aim to show that $ C $ is a compression for $ S $, i.e., with $\mathbb{P}^m$-probability one, $ \delta \in \mathcal{A}(C) $ for all $ \delta \in S $. Recalling the definition of the mapping $ \mathcal{A}(C) $ from~\eqref{eq:main_algo_def} we note that, under this scenario, the sets $ \mathcal{A}_1(C) $ and $ \mathcal{A}_3(C) $ are trivially equal to $ \mathcal{A}_1(S) $ and $ \mathcal{A}_3(S) $, respectively. Moreover, since $ C \subset S $ and $ x^\star_k(C) = x^\star_k(S) $ for all $ k \in \{ 0, \ldots, \ell \} $, which implies that $ R_k(C) = R_k(S) $ by Assumption~\ref{assump:Feas_Uniq}, we have that $ S \setminus \cup_{j = 0}^{k-1} R_j(S) = S \setminus \cup_{j = 0}^{k-1} R_j(C) \supset C \setminus \cup_{j = 0}^{k-1} R_j(C) $. The latter implies then that the inequalities in $ \mathcal{A}_2(S) $ constitute a superset of those in $ \mathcal{A}_2(C) $, hence, that problem is more constrained and as a result $ \mathcal{A}_2(S) \subset \mathcal{A}_2(C) $. By construction we have that $\delta \in \mathcal{A}(S)$ for all $\delta \in S$. This in turn implies that if a sample is not removed, then it will have to be included in 
	$\mathcal{A}_2(S)$, and due to the established inclusion also in $\mathcal{A}_2(C)$. Since $\mathcal{A}_1(S) = \mathcal{A}_1(C)$ and $\mathcal{A}_3(S) = \mathcal{A}_3(C)$, we then have that $\delta \in \mathcal{A}(C)$ for all $\delta \in S$, showing that $C$ is a compression set. This concludes the proof of the proposition.
\end{proof}

\emph{Proof of Theorem~\ref{theo:main_result_fully_supp}:}
A natural compression candidate is
%For the results of this subsection we consider Assumption \ref{assump:fully_supp}, and let the (natural) candidate compression set be
\begin{equation}
C = \bigcup_{k = 0}^{\ell} \mathrm{supp}(x^\star_{k}(S)),
\label{eq:compression_candidate}
\end{equation}
as it consists of the support sets of the intermediate problems.

\emph{Existence:} We prove that $ C $ in~\eqref{eq:compression_candidate} is a compression set. By the sufficiency part of Proposition~\ref{prop:monotone}, it suffices to show that, with $\mathbb{P}^m$-probability one, the set $ C $ in~\eqref{eq:compression_candidate} satisfies $x_k^\star(C) = x_k^\star(S)$, for all $k \in \{0,\ldots,\ell\}$. We will show this by means of induction. For the base case $k=0$, notice that 
%\begin{align}
%c^\top x_0^\star(C) = c^\top z^\star(C) \leq c^\top z^\star(S) = c^\top x_0^\star(S), \label{eq:cost_rank1}
%\end{align}
%where the inequality is due to the fact that $C \subset S$, while the equalities are due to \eqref{eq:z_def}. Moreover, we have that
\begin{align}
c^\top x_0^\star(S) = c^\top z^\star(S) &= c^\top z^\star(\mathrm{supp}(x_0^\star(S))) \nonumber \\
& = c^\top x_0^\star(C), \label{eq:cost_rank2}
\end{align}
where the first equality is due to \eqref{eq:z_def}, the second equality is due to the fact that $\mathrm{supp}(x_0^\star(S))$ is the support set of $ x^\star_0(S) $, while the last equality is due to Assumption~\ref{assump:fully_supp}, the definition of support set and the fact that $ \mathrm{supp}(x^\star_{0}(S)) \subset C $. By \eqref{eq:cost_rank2}, and Assumption~\ref{assump:Feas_Uniq}, we conclude that 
$x_0^\star(C) = x_0^\star(S)$.

To complete the induction argument, we assume that $ x^\star_{j}(C) = x^\star_{j}(S) $ for all $ j \in \{0,\ldots,\bar{k}\} $, for some $\bar{k} <\ell$. We will show that $ x^\star_{\bar{k} + 1}(C) = x^\star_{\bar{k} + 1}(S) $. To this end, by Assumption~\ref{assump:fully_supp}, $ x^\star_{j}(C) = x^\star_{j}(S) $ for all $ j \leq \bar{k} $ implies that $ \mathrm{supp}(x^\star_{j}(C)) = \mathrm{supp}(x^\star_{j}(S)) $, for all $j \leq \bar{k} $, as the support set of each optimal solution is unique. Moreover, $R_j(C) = R_j(S)$ for all $j<\bar{k}$, as $R_j(C) = \mathrm{supp}(x_j^\star(C))$ for fully-supported problems. Similarly to the base case we have that
\begin{align}
c^\top x_{\bar{k}+1}^\star(C) &= c^\top z^\star(C\setminus \cup_{j=0}^{\bar{k}} R_j(S)) \nonumber \\
&\leq c^\top z^\star(S\setminus \cup_{j=0}^{\bar{k}} R_j(S)) = c^\top x_{\bar{k}+1}^\star(S), \label{eq:cost_rank3}
\end{align}
where the first and last equalities are due to \eqref{eq:z_def}, and the inequality is due to the fact that $ C\setminus \cup_{j=0}^{\bar{k}} R_j(S) \subseteq S\setminus \cup_{j=0}^{\bar{k}} R_j(S).$

Moreover 
\begin{align}
c^\top &x_{\bar{k}+1}^\star(S) = c^\top z^\star(S\setminus \cup_{j=0}^{\bar{k}} R_j(S)) \nonumber \\ &= c^\top z^\star(\mathrm{supp}(x_{\bar{k}+1}^\star(S))) \leq c^\top z^\star(C \setminus \cup_{j=0}^{\bar{k}} R_j(S)) \nonumber \\&= c^\top x_{\bar{k} + 1}^\star(C), \label{eq:cost_rank4}
\end{align}
where the first and last equalities are due to \eqref{eq:z_def}, the second one due to the fact that $\mathrm{supp}(x_{\bar{k} + 1}^\star(S)) \subset S\setminus \cup_{j=0}^{\bar{k}} R_j(S)$, and the inequality holds since $R_j(C)=R_j(S)$ and $\mathrm{supp}(x_{\bar{k}+1}^\star(S)) \subset C\setminus \cup_{j=0}^{\bar{k}} R_j(S)$.
By \eqref{eq:cost_rank3} and \eqref{eq:cost_rank4} we then have that 
$x_{\bar{k}+1}^\star(C) = x_{\bar{k}+1}^\star(S)$, thus concluding the induction proof. In other words, we have shown that
\begin{align}
x_k^\star(C) = x_k^\star(S), \text{ for all } k \in \{0,\ldots,\ell\}. \label{eq:cost_cost}
\end{align}
Equation~\eqref{eq:cost_cost} together with the sufficiency part of Proposition~\ref{prop:monotone} shows that the candidate $ C $ in~\eqref{eq:compression_candidate} is a compression set. 

\emph{Uniqueness:} To show that $C$ in~\eqref{eq:compression_candidate} is the unique compression set, assume for the sake of contradiction that there exists another compression $C' \subset S$ for the mapping defined in~\eqref{eq:main_algo_def}, $ C' \neq C $, with $|C'|=(\ell+1)d$.
Since $C' \subset S$ is a compression, Proposition \ref{prop:monotone} (necessity part) implies that $x_k^\star(C')=x_k^\star(S)$, for all $k\in \{0,\ldots,\ell\}$, as $ C' $ is a compression. Besides, by the existence part (Step~$ 1 $ above), we have shown that for  $ C $ given in~\eqref{eq:compression_candidate} we have that $ x^\star_k(C)  = x^\star_{ k }(S)$ for all $ k \in \{ 0, \ldots, \ell \}.$ We thus have that for all $k\in \{0,\ldots,\ell\}$, $x_k^\star(C)=x_k^\star(C')$. This in turn implies that $ \mathrm{supp}(x^\star_{k}(C)) = \mathrm{supp}(x^\star_{k}(C')) $ for all $ k \in \{0, \ldots, \ell \} $, which, by Assumption \ref{assump:fully_supp}, leads to $C=C'$ (to see this notice that $ \cup_{k = 0}^{\ell} \mathrm{supp}(x^\star_{ k }(S)) \subset C' $ and $ |C'| = (\ell+1)d $), thus establishing a contradiction.

\emph{Linking Theorem~\ref{theo:Kostas_paper} with the probability of constraint violation:} Recall that
\begin{equation}
\mathcal{A}(C) = \Big(  \mathcal{A}_1(C) \cap \mathcal{A}_2(C) \Big) \cup \mathcal{A}_3(C),
\label{eq:main_res_structure_mapping}
\end{equation}
where the individual sets are as in~\eqref{eq:main_algo_def}. Recall also that $ \mathcal{A}_3(S) $ is a discrete set that contains the removed samples throughout the execution of the scheme of Figure~\ref{fig:proposed_scheme}. Fix any $S$ with $m$ scenarios, set $r=\ell d$ and let $m >(\ell+1)d$. Fix also $\epsilon \in (0,1)$.
Let $C \subset S$ with $|C|=(\ell+1)d$ be the unique compression defined in~\eqref{eq:compression_candidate}. We have that
\begin{align}
\mathbb{P}&\{\mathcal{A}(C)\} = \mathbb{P}\{ (\mathcal{A}_1(C) \cap \mathcal{A}_2(C))\cup \mathcal{A}_3(C)\} \nonumber \\
&=\mathbb{P}\{\mathcal{A}_1(C) \cap \mathcal{A}_2(C)\}, \nonumber \\
&\leq \mathbb{P}\{\mathcal{A}_1(C) \} = \mathbb{P}\{\delta \in \Delta:~ g(x^\star(C),\delta)\leq0 \}, \nonumber \\
&= \mathbb{P}\{\delta \in \Delta:~ g(x^\star(S),\delta)\leq0 \}, \label{eq:prob_ineq1}
\end{align}
where the first equality is due to the fact that $\mathbb{P}\{\mathcal{A}_3(C)\}=0$, since $\mathcal{A}_3(C)$ is a discrete set and we have imposed the non-degeneracy condition of Assumption \ref{assump:non_deg_assump} which prevents scenarios to have accumulation points with non-zero probability, while the inequality is due to the fact that $\mathcal{A}_1(C) \cap \mathcal{A}_2(C) \subseteq \mathcal{A}_1(C)$. The second last equality is by definition of $\mathcal{A}_1(C)$, and the last one follows from the fact that $x^\star(C) = x^\star(S)$ (see~\eqref{eq:cost_cost}).

We then have that if $\mathbb{P}\{\delta \in \Delta:~ g(x^\star(S),\delta)>0\} > \epsilon$ then $\mathbb{P}\{\delta \in \Delta:~ \delta \notin \mathcal{A}(C)\} > \epsilon$. As a result, $\{(\delta_1,\ldots,\delta_m)\in \Delta^m:~ \mathbb{P}\{\delta \in \Delta:~ g(x^\star(S),\delta)>0\} > \epsilon \} \subseteq \{(\delta_1,\ldots,\delta_m)\in \Delta^m:~ \mathbb{P}\{\delta \in \Delta:~  \delta \notin \mathcal{A}(C)\}  > \epsilon \}$. The last statement implies then that
\begin{align}
&\mathbb{P}^m \{(\delta_1,\ldots,\delta_m)\in \Delta^m:~ \mathbb{P}\{\delta \in \Delta:~ g(x^\star(S),\delta)>0\} > \epsilon \} \nonumber \\
&\leq \mathbb{P}^m\{(\delta_1,\ldots,\delta_m)\in \Delta^m:~ \mathbb{P}\{ \delta \notin \mathcal{A}(C)\}  > \epsilon \}.  \label{eq:prob_ineq2}
\end{align}
Therefore, since set $ C $ in \eqref{eq:compression_candidate} is the unique compression of $ \mathcal{A}(C) $, by Theorem~\ref{theo:Kostas_paper} we have that
\begin{align}
\mathbb{P}^m\{(\delta_1,\ldots,\delta_m)\in \Delta^m&:~ \mathbb{P}\{\delta \in \Delta:~  \delta \notin \mathcal{A}(C)\}  > \epsilon \} \nonumber \\
&\leq  \sum_{i = 0}^{r+d-1} {m \choose i}\epsilon^i (1-\epsilon)^{m-i}. \label{eq:prob_ineq3}
\end{align}
By \eqref{eq:prob_ineq2} and \eqref{eq:prob_ineq3} we then have that $\mathbb{P}^m \{(\delta_1,\ldots,\delta_m)\in \Delta^m:~ \mathbb{P}\{\delta \in \Delta:~ g(x^\star(S),\delta)>0\} > \epsilon \} \leq \sum_{i = 0}^{r+d-1} {m \choose i}\epsilon^i (1-\epsilon)^{m-i}$, thus concluding the proof of Theorem~\ref{theo:main_result_fully_supp}. \qed

\subsection{The non-degenerate case} \label{sec:main_result_non_deg}
 Throughout this subsection, we consider Assumption~\ref{assump:non_deg_assump}.
Let $m > (\ell+1)d$, and consider any set $C \subset S$ with $|C| = (\ell+1)d$.
We modify the mapping $\mathcal{A}(C)$ in 
\eqref{eq:main_algo_def} by replacing the second set in its definition with
\begin{align}  
&\mathcal{A}_2(C) = \bigcap_{k = 0}^{\ell} \big\{ \delta \in \Delta: F_k(z^\star(J \cup \{\delta \})) \leq F_k(x^\star_{k}(C)), \nonumber \\
&~~~~\text{for all } J \subset C \setminus \cup_{j = 0}^{k-1} R_j(C), \text{ with } |J| = d-1 \big\},
\label{eq:main_algo_def_mod}
\end{align}
where $F_k(\cdot)$ is the augmented objective function defined in Remark~\ref{rem:regular_cost}, related to $P_k$ defined by means of the regularization procedure of Section \ref{sec:discard_scheme}. The above inequality is to be understood in a lexicographic sense as detailed in Remark \ref{rem:regular_cost}.
A natural candidate compression set in this case is 
\begin{equation}
C = \bigcup_{k = 0}^{\ell} (\mathrm{supp}(x^\star_{k}(S)) \cup Z_k(S)),
\label{eq:compression_candidate_regular}
\end{equation}

which is composed by the removed samples of the scheme, and the support set of the last stage together with the corresponding constraints in $ Z_{\ell}(S) $. In fact, we now append $Z_k(S)$ in the definition of $C$ to ensure that $|C|=(\ell+1)d$, as $|\mathrm{supp}(x^\star_{k}(S))|$ could be lower than $d$ as the intermediate problems might not be fully-supported. Similarly to the fully-supported case, our goal is to show that the compression set defined in~\eqref{eq:compression_candidate_regular} is the unique compression set of size $ (\ell + 1)d $ for the mapping in~\eqref{eq:main_algo_def}, with $ \mathcal{A}_2(C) $ in~\eqref{eq:main_algo_def_mod} in place of $ \mathcal{A}_2(C) $ in~\eqref{eq:main_algo_def}.  By \eqref{eq:removed_non_deg}, recall that $R_k(C) = \mathrm{supp}(x^\star_{k}(C)) \cup Z_k(C)$, $k \in \{0,\ldots,\ell-1\}$. 

\begin{prop}
	Suppose Assumptions~\ref{assump:Feas_Uniq} and~\ref{assump:non_deg_assump} hold. Let $ C $ be the set in~\eqref{eq:compression_candidate_regular}, and consider the scheme of Figure~\ref{fig:proposed_scheme} with the removed scenarios given by \eqref{eq:removed_non_deg}. We have that, with $\mathbb{P}^m$-probability one, the following items hold:
	\begin{itemize}
		\item[$ i) $] $ x^\star_{ k }(C)= x^\star_{ k }(S) $ and $ Z_k(C) = Z_k(S) $ for all $ k \in \{ 0, \ldots, \ell \} $.
		
		\item[$ ii) $] Let $ C' $ be any other compression of size $ (\ell + 1)d $. Suppose $ R_j(C) = R_j(C') $ for all $ j \in \{ 0, \ldots, \bar{k}- 1 \} $, where $ \bar{k} $ is the smallest index such that $ x^\star_{\bar{k}}(C') \neq x^\star_{\bar{k}}(C) $. Then $ x^\star_{\bar{k}}(C') \neq x^\star_{\bar{k}} (C' \cup \{\delta\}) $ for some $ \delta \in \mathrm{supp}(x^\star_{\bar{k}}(C)) \setminus \mathrm{supp}(x^\star_{ \bar{k} }(C')) $. Moreover, such a $\delta$ is in fact in the set $ C \setminus C' $.
	\end{itemize}
\label{prop:non_deg_case}
\end{prop}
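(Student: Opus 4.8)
The plan is to prove the two items in turn, in both cases lifting the machinery of the fully-supported case (Proposition~\ref{prop:monotone} and the Existence/Uniqueness steps in the proof of Theorem~\ref{theo:main_result_fully_supp}) to the \emph{regularized} programs. The key enabler is Remark~\ref{rem:regular_cost}: each $P_k$ equipped with the augmented objective $F_k$ of~\eqref{eq:augm_cost} is fully-supported, its unique support set of cardinality $d$ being $R_k=\mathrm{supp}(x_k^\star)\cup Z_k$ as in~\eqref{eq:removed_non_deg}. Working consistently with $F_k$ and $z^\star$ (see~\eqref{eq:z_def}) in place of $c^\top x$ and with $R_k$ in place of $\mathrm{supp}$, the arguments of Section~\ref{sec:main_proof_fully_supp} carry over, the only genuinely new ingredient being that the discarded sets $Z_k$ must be tracked together with the minimizers.

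For item $i)$ I would argue by induction on $k$ that $x_k^\star(C)=x_k^\star(S)$ \emph{and} $Z_k(C)=Z_k(S)$ simultaneously. For $k=0$, since $R_0(S)=\mathrm{supp}(x_0^\star(S))\cup Z_0(S)\subset C$ by construction of $C$ in~\eqref{eq:compression_candidate_regular}, solving $P_0$ with only the scenarios in $R_0(S)$ reproduces $x_0^\star(S)$ by non-degeneracy (Assumption~\ref{assump:non_deg_assump}, refined via Remark~\ref{rem:regular_cost}), while $x_0^\star(S)$ satisfies every remaining constraint of $C$; monotonicity of the optimal value in the constraint set then sandwiches $c^\top x_0^\star(C)$ between $c^\top z^\star(R_0(S))=c^\top x_0^\star(S)$ and $c^\top z^\star(S)=c^\top x_0^\star(S)$, so $x_0^\star(C)=x_0^\star(S)$ by Assumption~\ref{assump:Feas_Uniq}. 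Then $Z_0(C)=Z_0(S)$ follows from the elementary observation that if the $\nu$ smallest labels of a set all lie in a given subset, they are also the $\nu$ smallest labels of that subset (here the subset $C\setminus\mathrm{supp}(x_0^\star(C))$ of $S\setminus\mathrm{supp}(x_0^\star(S))$, using $Z_0(S)\subset C$). The inductive step mirrors~\eqref{eq:cost_rank3}–\eqref{eq:cost_rank4} with $F_k$ replacing $c^\top x$, the inductive hypothesis giving $R_j(C)=R_j(S)$ for $j\le\bar k$ and $R_{\bar k+1}(S)\subset C$. This induction additionally yields $R_k(C)=R_k(S)$ and $\mathrm{supp}(x_k^\star(C))=\mathrm{supp}(x_k^\star(S))$ for all $k$, since $R_k(S)$ is a cardinality-$d$ determining set contained in the scenarios available at stage $k$ of the $C$-scheme, and the regularized program has a unique such set.

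For item $ii)$, write $\mathcal R=\bigcup_{j=0}^{\bar k-1}R_j(C)=\bigcup_{j=0}^{\bar k-1}R_j(C')$, which by the hypothesis and item $i)$ also equals $\bigcup_{j=0}^{\bar k-1}R_j(S)$; thus $P_{\bar k}$ fed by $C$ and by $C'$ minimizes $F_{\bar k}$ over $C\setminus\mathcal R$ and $C'\setminus\mathcal R$ respectively. First, $x_{\bar k}^\star(C)=x_{\bar k}^\star(S)$ is feasible for $P_{\bar k}$ fed by $C'$, because its constraint set $C'\setminus\mathcal R\subseteq S\setminus\mathcal R$ is entirely satisfied by $x_{\bar k}^\star(S)$; hence $c^\top x_{\bar k}^\star(C')\le c^\top x_{\bar k}^\star(C)$. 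Now suppose, for contradiction, that $x_{\bar k}^\star(C'\cup\{\delta\})=x_{\bar k}^\star(C')$ for \emph{every} $\delta\in\mathrm{supp}(x_{\bar k}^\star(C))\setminus C'$ (vacuously true if $\mathrm{supp}(x_{\bar k}^\star(C))\subseteq C'$). Then $x_{\bar k}^\star(C')$ satisfies every constraint indexed by $\mathrm{supp}(x_{\bar k}^\star(C))$: those inside $C'$ belong to its own constraint set $C'\setminus\mathcal R$ (as $\mathrm{supp}(x_{\bar k}^\star(C))$ is disjoint from $\mathcal R$), and those outside $C'$ by the contradiction hypothesis. By Assumption~\ref{assump:non_deg_assump}, $x_{\bar k}^\star(C)$ is the minimizer of $c^\top x$ subject only to the constraints in $\mathrm{supp}(x_{\bar k}^\star(C))$; since $x_{\bar k}^\star(C')$ is feasible for this program, $c^\top x_{\bar k}^\star(C')\ge c^\top x_{\bar k}^\star(C)$, so together with the previous inequality $c^\top x_{\bar k}^\star(C')=c^\top x_{\bar k}^\star(C)$, whence $x_{\bar k}^\star(C')$ is also a minimizer of that program and, by Assumption~\ref{assump:Feas_Uniq}, $x_{\bar k}^\star(C')=x_{\bar k}^\star(C)$ — contradicting the minimality of $\bar k$. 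Therefore some $\delta\in\mathrm{supp}(x_{\bar k}^\star(C))\setminus C'$ satisfies $x_{\bar k}^\star(C'\cup\{\delta\})\ne x_{\bar k}^\star(C')$; such $\delta$ lies in $C$ because $\mathrm{supp}(x_{\bar k}^\star(C))=\mathrm{supp}(x_{\bar k}^\star(S))\subset C$, and lies outside $\mathrm{supp}(x_{\bar k}^\star(C'))\subseteq C'$, which gives the stated conclusion.

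The step I expect to be the main obstacle is the lexicographic bookkeeping around the sets $Z_k$. One must confirm that, once $x_j^\star(C')=x_j^\star(C)$ and $R_j(C')=R_j(C)$ for $j<\bar k$, appending a single $\delta\in\mathrm{supp}(x_{\bar k}^\star(C))$ to $C'$ leaves the removed sets $R_0,\dots,R_{\bar k-1}$ unaffected — so that $x_{\bar k}^\star(C'\cup\{\delta\})$ is genuinely the $F_{\bar k}$-minimizer over $(C'\setminus\mathcal R)\cup\{\delta\}$; this uses that $\delta$, being in $\mathrm{supp}(x_{\bar k}^\star(S))$, is disjoint from $\mathcal R$ and, crucially, carries a label that cannot belong to any $Z_j(S)$ with $j<\bar k$, as otherwise it would have been discarded before stage $\bar k$. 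Establishing this cleanly, along with the transfer identities $R_k(C)=R_k(S)$ and $\mathrm{supp}(x_k^\star(C))=\mathrm{supp}(x_k^\star(S))$ invoked throughout, is the delicate part; it follows the template of the fully-supported case provided one systematically reasons with the augmented objectives $F_k$ of Remark~\ref{rem:regular_cost}, for which support sets are unique and of cardinality exactly $d$.
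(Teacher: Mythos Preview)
Your proposal is correct and follows essentially the same inductive strategy as the paper for item $i)$, and the same contrapositive strategy for the first claim of item $ii)$. The one noteworthy difference is in item $ii)$: the paper takes the contrapositive over $\delta\in\mathrm{supp}(x_{\bar k}^\star(C))\setminus\mathrm{supp}(x_{\bar k}^\star(C'))$, invokes Lemma~2.12 of~\cite{Cal:10} to pass from $C'\cup\{\delta\}$ to $C'\cup\mathrm{supp}(x_{\bar k}^\star(C))$, and then argues separately by contradiction that the resulting $\delta$ must lie in $C\setminus C'$; you instead restrict the contrapositive hypothesis to the smaller set $\mathrm{supp}(x_{\bar k}^\star(C))\setminus C'$, which makes the argument self-contained (no external lemma) and yields the ``Moreover'' clause for free since $\mathrm{supp}(x_{\bar k}^\star(C))\subset C$. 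Both approaches leave the same bookkeeping step implicit---that appending $\delta$ to $C'$ does not perturb $R_0,\ldots,R_{\bar k-1}$---which you correctly flag and justify via the label argument.
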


\begin{proof}
	\emph{Item $ i) $:} We use induction. Fix $ k = 0 $ and note that
\begin{equation}
x_0^\star(C) = z^\star(C) = z^\star(\mathrm{supp}(x^\star_0(S))) = x_0^\star(S), \label{eq:base_case_Prop_2}
\end{equation}
where the first equality follows from the definition in~\eqref{eq:z_def}, for the second one we use the definition of the support set, and the third one follows from the definition of $ x^\star_0(S) $ and the definition of the support set. Moreover, we have that 
\begin{align}
Z_0(C) &=  \Big\{ \nu_0(S)\text{ scenarios with the smallest labels in } \nonumber \\ & \quad \quad \quad \quad~~C \setminus \big\{ \mathrm{supp}(x^\star_0(S)) \big\}  \Big\} \nonumber\\ &=  \Big\{ \nu_0(S)\text{ scenarios with the smallest labels in } \nonumber \\ & \quad \quad \quad \quad~~S \setminus \big\{ \mathrm{supp}(x^\star_0(S)) \big\}  \Big\} = Z_0(S),
\end{align}
where the first equality is due to the definition of $ C $ in \eqref{eq:compression_candidate_regular} and the fact that $ Z_0(S) \subset C $, while the last one is due to the definition of $ Z_0(S) $ in \eqref{eq:regular}. Assume now that $ x^\star_{k}(C) = x^\star_{k}(S) $ and $ Z_k^\star(C) = Z^\star_{k}(S)  $ for all $ k \in \{ 0, \ldots, \bar{k} \} $, and consider the case $ \bar{k} + 1 $. Indeed, we have that
\begin{align}
x^\star_{\bar{k} + 1}(C) &= z^\star(C \setminus \cup_{j = 0}^{\bar{k}} R_j(C)) = z^\star(\mathrm{supp}(x^\star_{\bar{k} + 1}(S))) \nonumber \\ &=z^\star(S \setminus \cup_{j = 0}^{\bar{k}} R_j(S))=  x^\star_{\bar{k} + 1}(S),
\end{align}
where these relations follow as in \eqref{eq:base_case_Prop_2} for the case $ k = 0 $. We also have that
\begin{align}
&Z^\star_{\bar{k}+1}(C) = \Big\{  \nu_{\bar{k} + 1}(S)\text{ scenarios with the smallest labels in } \nonumber \\ &  C \setminus \Big( \bigcup_{j = 0}^{\bar{k}}  R_j (S) \cup \mathrm{supp}(x^\star_{\bar{k} + 1}(S)) \Big)
  \Big\} =  Z^\star_{\bar{k} + 1}(S),
\end{align}
since $ Z_{\bar{k} + 1}^\star(S) \subset C \setminus \bigcup_{j = 0}^{\bar{k}}  \{ R_j(S) \cup \mathrm{supp}(x^\star_{\bar{k} + 1}(S)) \} $ due to the particular choice of $ C $ in~\eqref{eq:compression_candidate_regular}, thus proving that for $ C $ in~\eqref{eq:compression_candidate_regular} we have $ x_k^\star(C) = x^\star_{k}(S) $ and $ Z_k(C) = Z_k(S) $, for all $ k \in \{ 0, \ldots, \ell \} $. This concludes the proof of item $ i) $.

\emph{Item $ ii) $:} 	We prove the contrapositive. Assume that for all $ \delta \in \mathrm{supp}(x^\star_{ \bar{k} }(C)) \setminus \mathrm{supp}(x^\star_{ \bar{k} }(C')) $ we have that $ x^\star_{ \bar{k} }(C') = x^\star_{ \bar{k} }(C' \cup \{\delta\} ).$ We will show that $ x^\star_{ \bar{k} }(C) = x^\star_{ \bar{k} }(C') $. We then have that
\begin{align}
c^\top x^\star_{ \bar{k} }(C') &= c^\top x^\star_{ \bar{k} }(C' \cup \{ \delta \}) = c^\top x^\star_{ \bar{k} }(C' \cup \mathrm{supp}(x^\star_{ \bar{k} }(C))) \nonumber \\
&= c^\top x^\star_{\bar{k}}(C),
\end{align}
where the second equality holds due to Lemma $ 2.12 $ in \cite{Cal:10} since $ C' \cup \{ \delta \} \subset C' \cup \mathrm{supp}(x^\star_{\bar{k}}(C)) $. The last equality follows from the definition of the support set and the non-degeneracy condition of Assumption 3. By Assumption~\ref{assump:Feas_Uniq} we then conclude that $ x^\star_{ \bar{k} }(C) = x^\star_{ \bar{k} }(C') $. 

We now show that such a $ \delta $ must belong to $ C \setminus C' $. In fact, choose $ \bar{\delta} \in \mathrm{supp}(x^\star_{\bar{k}}(C)) \setminus \mathrm{supp}(x^\star_{\bar{k}}(C'))  $ and assume for the sake of contradiction that $ \bar{\delta} \in C' $. This implies that $ \bar{\delta} \in R_j(C') $ for some $ j \geq \bar{k} $. In this is the case, we have that
\begin{align}
c^\top x^\star_{\bar{k}} (C' \cup \{ \bar{\delta} \}) &= c^\top z^\star \Big( \big(C'\setminus\bigcup_{j = 0}^{\bar{k} - 1} R_j(C)\big) \cup \{ \bar{\delta} \}\Big) \nonumber \\
&= c^\top z^\star \left( \mathrm{supp}(x^\star_{\bar{k}}(C')) \right)  \nonumber  \\ &= c^\top x^\star_{\bar{k}}(C') \label{eq:last_contradiction}
\end{align}
where the first relation holds due to~\eqref{eq:z_def} and the fact that $ R_j(C') = R_j(C) $ for all $ j < \bar{k} $, the second one is due to the fact that $ \mathrm{supp}(x^\star_{\bar{k}}(C')) \subset C'\setminus\bigcup_{j = 0}^{\bar{k} - 1} R_j(C) \cup \{ \bar{\delta} \}  $ and $ \bar{\delta} \in R_j(C') $ for $ j \geq \bar{k} $. The third equality follows from the definition of the support set and the non-degeneracy condition of Assumption~\ref{assump:non_deg_assump}.  However, note that~\eqref{eq:last_contradiction} contradicts our choice of $ \bar{\delta} $, which requires that $x_{\bar{k}}^\star(C') \neq x_{\bar{k}}^\star(C' \cup \{\bar{\delta}\})$. This concludes the proof.
\end{proof}
\emph{Proof of Theorem~\ref{theo:main_result_non_deg}: Existence.}  The existence part follows \textit{mutatis mutandis} from the one of Theorem~\ref{theo:main_result_fully_supp}. In fact, $\mathcal{A}_1(C) = \mathcal{A}_1(S)$ and $ \mathcal{A}_3(C) = \mathcal{A}_3(S) $ by Proposition~\ref{prop:non_deg_case}, item $ i) $, and $ \mathcal{A}_2(S) \subset \mathcal{A}_2(C) $ as $ C \subset S $ (see the discussion at the end of Proposition~\ref{prop:monotone}).
 \label{subsub:step2}
\emph{Uniqueness:} Let $ C' $ be another compression of size $ (\ell + 1)d $ and assume for the sake of contradiction that $ C \neq C' $. We can distinguish two possible cases. Case I: there exists a $ \bar{k} \in \{ 0, \ldots, \ell \} $ such that $ x^\star_{\bar{k}}(C') \neq x^\star_{\bar{k}}(C) $; or case II: $ x^\star_{k}(C') = x^\star_{k}(C) $ for all $ k \in \{ 0, \ldots, \ell \} $, but there exists a $ \tilde{k} \in \{ 0, \ldots, \ell \} $ such that $ Z_{\tilde{k}}(C') \neq Z_{\tilde{k}}(C) $. In the sequel, we argue separately that neither of these cases can happen.

\emph{Case I:} Let $ \bar{k} $ be the smallest index such that $ x^\star_{ \bar{k} }(C') \neq x^\star_{ \bar{k} }(C) $, and let $ \tilde{k} \leq \bar{k} $ be the smallest index such that $ Z_{\tilde{k}}(C') \neq Z_{\tilde{k}}(C).$ Consider first the case where $ \tilde{k} < \bar{k} $. Under these definitions, note that $ R_j(C') = R_j(C)  $ for all $ j < \tilde{k}$. Moreover, we have that
\begin{align}
Z_{\tilde{k}}(C') &= \Big\{  \nu_{\tilde{k}}(C)\text{ scenarios with the smallest labels in } \nonumber  \\ &  \quad \quad \quad \quad~~C' \setminus \bigcup_{j = 0}^{\tilde{k} - 1} R_j(C')  \Big\} \nonumber \\
&= \Big\{ \nu_{\tilde{k}}(C)\text{ scenarios with the smallest labels in} \nonumber \\ &  \quad \quad \quad \quad~~C' \setminus \bigcup_{j = 0}^{\tilde{k} - 1} R_j(S)  \Big\}, 
\end{align} 
where the first equality is by the definition in~\eqref{eq:regular} and the fact that $ \nu_{\tilde{k}}(C') = \nu_{\tilde{k}}(C) $ since $ \tilde{k} < \bar{k} $;  the second equality follows since $ R_j(C') = R_j(C) = R_j(S) $ -- the last equality follows from Proposition~\ref{prop:non_deg_case}, item $ i) $ --for all $ j \leq \tilde{k} - 1 $ and due to the uniqueness requirement of Assumption~\ref{assump:Feas_Uniq}. Note that $ Z_{\tilde{k}}(C') \neq Z_{\tilde{k}}(S) $ and $ C' \subset S $ implies that, for all $ \delta \in Z_{\tilde{k}}(C') \setminus Z_{\tilde{k}}(S) $,
\begin{equation}
y_\delta > \max_{\xi \in Z_{\tilde{k}}(S)} y_\xi = y_{\mathrm{max}},
\label{eq:relation_delta}
\end{equation}
where $ y_\delta \in \mathbb{N} $ corresponds to the label associated to $ \delta$.

We will use the relation~\eqref{eq:relation_delta} to show that any element in $ C \setminus C' $ has a label greater than $ y_{\mathrm{max}} $. In fact, note that
\begin{equation}
C' \setminus C \subset \left\{ \cup_{j = \tilde{k} + 1}^\ell R_j(C') \right\} \cup \left\{  Z_{\tilde{k}}(C') \setminus Z_{\tilde{k}}(C) \right\},
\label{eq:relation_delta_4}
\end{equation}
hence it suffices to show that any element in either set in the right-hand side of~\eqref{eq:relation_delta_4} is greater than $ y_{\mathrm{max}} $. To this end, fix any $ \delta \in \cup_{j = \tilde{k} + 1}^\ell R_j(C') $ and note that 
\begin{align}
 y_\delta &> \max_{\xi \in Z_{\tilde{k}}(C') \setminus Z_{\tilde{k}}(C)} y_\xi >y_{\mathrm{max}},
\label{eq:relation_delta_2}
\end{align}
where the first inequality is due to the fact that since such a $\delta$ has not been removed up to stage $\tilde{k}$, then its label will be greater than the ones in $Z_{\tilde{k}}(C')$, and as a result the ones in $Z_{\tilde{k}}(C') \setminus Z_{\tilde{k}}(C)$. The second inequality follows from \eqref{eq:relation_delta} and the fact that $Z_{\tilde{k}}(C') \setminus Z_{\tilde{k}}(C) \subset Z_{\tilde{k}}(S)$. Therefore, for any $ \delta \in C' \setminus C $ we have that $ y_{\delta} > y_{\mathrm{max}} $. 

From now on, let $ \delta $ be the scenario associated to $ y_{\mathrm{max}}. $ Pick $ \bar{J} = \{ \mathrm{supp}(x^\star_{\tilde{k}}(C)) \} \cup \{ Z_{\tilde{k}}(C) \setminus \{ \delta  \}\} $, which has cardinality $ d - 1 $ and is a subset of $ C \setminus \cup_{j=0}^{\tilde{k} - 1} R_j(C) $, and fix $ \bar{\delta} \in C' \setminus C $. Note that under this choice of $ \bar{\delta} $  
\begin{align}
F_{\tilde{k}}(z^\star(\bar{J} \cup \{ \bar{\delta} \})) >  F_{\tilde{k}} (x^\star_{\tilde{k}}(C)),
\label{eq:contradiction_bar_delta}
\end{align}
since $ y_{\bar{\delta}} > y_{\mathrm{max}} $ (by our previous discussion) and the inequality is interpreted lexicographically. However, this contradicts the fact that $ C $ is a compression set (see Definition \ref{def:compression_set}) as $ \bar{\delta} \in C' \setminus C \subset S $, hence $\bar{\delta} \notin \mathcal{A}_3(C')$ has not been removed, but $ \bar{\delta} \notin \mathcal{A}_2(C) $ due to~\eqref{eq:contradiction_bar_delta}.

Consider now the case $ \tilde{k} = \bar{k}.$ Note that, in this case, we have that $ R_j(C') = R_j(C) $ for all $ j \leq \bar{k} - 1 $. Based on the result of Proposition~\ref{prop:non_deg_case}, item $ ii) $, applied to $ C' $ (note that the assumptions of Proposition~\ref{prop:non_deg_case}, item $ ii) $, are satisfied with our choice of $ C' $), we observe that  there exists a $ \bar{\delta} \in  \{ \mathrm{supp}(x^\star_{k}(C)) \setminus \mathrm{supp}(x^\star_{k}(C')) \} \cap \{ C\setminus C' \}$ such that $ x^\star_{ \bar{k} }(C') \neq x^\star_{ \bar{k} }(C' \cup \{\bar{\delta}\}).  $ Repeating the arguments following equations~\eqref{eq:bar_k_x_star_def} and~\eqref{eq:bar_k_x_star_delta_def} in the necessity proof of Proposition~\ref{prop:monotone} with $ C' $ in the place of $ C $ in that proposition, we reach a contradiction that $ C' $ is a compression set. 

\emph{Case II:} We can reach a contradiction if case II holds in a similar fashion as in case I. In fact, letting $ \tilde{k} $ be the smallest index such that $ Z_{\tilde{k}}(C') \neq Z_{\tilde{k}}(C) $, the proof proceeds in an identical manner with case I.

Hence, we conclude that in any case $ C = C' $, thus proving uniqueness of the compression set in~\eqref{eq:compression_candidate_regular}. 

\emph{Linking Theorem~\ref{theo:Kostas_paper} with the probability of violation:} Note that for the non-degenerate case the mapping has the same structure as the one in~\eqref{eq:main_algo_def}, with the set $ \mathcal{A}_2(C) $ in~\eqref{eq:main_algo_def} being substituted with the one in~\eqref{eq:main_algo_def_mod}. The arguments then follows \textit{mutatis mutandis} the ones in the last part of the fully-supported case. This concludes the proof of Theorem~\ref{theo:main_result_non_deg}. \qed

\section{Tightness of the bound of Theorem~\ref{theo:main_result_fully_supp}} \label{sec:tight}
\subsection{Class of programs for which the bound is tight}
We provide a sufficient condition on the problems $ P_k $ so that the solution returned by the scheme of Figure~\ref{fig:proposed_scheme} achieves the upper bound given by the right-hand side of~\eqref{eq:thm_sampl-disc_new_2} when all the intermediate problems $ P_k, k = 0, \ldots ,\ell, $ are fully-supported. The result of this section implies that the bound of Theorem~\ref{theo:main_result_fully_supp} is tight, i.e., there exists a class of convex scenario programs where it holds with equality.

To this end, we replace the mapping $ \mathcal{A} $ in \eqref{eq:main_algo_def} with $ \mathcal{\bar{A}}: \Delta^m \rightarrow 2^\Delta $ defined
\begin{align}
\bar{\mathcal{A}}(C) = \Big\{ \delta \in \Delta: g(x^\star_{\ell}(C), &\delta) \leq 0 \Big\} \cup \nonumber \\ &\left\{  \bigcup_{k = 0}^{\ell - 1} \mathrm{supp}(x^\star_{k}(C)) \right\}.
\label{eq:def_mapping_tight_case}
\end{align}
Note that $ \bar{\mathcal{A}}(C) $ coincides with the one in \eqref{eq:main_res_structure_mapping}, but without the set $ \mathcal{A}_2(C) $ in its definition. We impose the following assumption.

\begin{assump}
	Fix any $S = \{ \delta_1, \ldots, \delta_{m} \} \in \Delta^m $ and let $ C \subset S $. For any $ k \in \{ 0, \ldots, \ell \} $ and $ \delta \in S$ such that $  \delta \in  \mathrm{supp}(x^\star_{k}(C)) $, we have that
	\[ 
	g(z^\star(J),\delta) > 0,	\]
	\label{assum:tight_result}
	for all $ J \subset C \setminus \big( \cup_{j=0}^{k-1} \mathrm{supp}(x^\star_{j}(C)) \cup \{ \delta \} \big) $ with $ |J| = d $.
\end{assump}

Assumption~\ref{assum:tight_result} imposes a restriction on the class of fully-supported problems. For instance, the pictorial example of Figure~\ref{fig:pic-example} does not satisfy Assumption~\ref{assum:tight_result}, even though all the intermediate problems $ P_k $ are fully-supported, as the dashed-blue removed constraint is not violated by the resulting solution. Indeed, Assumption~\ref{assum:tight_result} requires that, with $ \mathbb{P}^m $-probability one, whenever a sample belongs to the support scenarios of any intermediate problem, then the scenario associated with it is violated by all the solutions that could have been obtained using any subset of cardinality $ d $ from the remaining samples. Note that Assumption~\ref{assum:tight_result} is similar to the requirement of Theorem \ref{theo:sampl-disc}~\cite{CG:11,Cal:10}, however, in Theorem~\ref{theo:main_result_tight} below we exploit it in conjunction with the discarding scheme of Figure~\ref{fig:proposed_scheme} to show that the result of Theorem~\ref{theo:main_result_fully_supp} is tight. This serves as a constructive argument for the existential result of~\cite{CG:11}. In this paper we do not offer any means to check the validity of Assumption~\ref{assum:tight_result}; however, observe that the scenario programs studied in Section \ref{sec:analytic_sol} satisfy such an assumption.

\begin{theo}
	Consider Assumptions \ref{assump:Feas_Uniq}, \ref{assump:fully_supp}, and \ref{assum:tight_result}. Fix $\epsilon \in (0,1)$, set $r=\ell d$ and let $m > r+d$. Consider also the scenario discarding scheme as encoded by \eqref{eq:removed_constraint_fully_supp} and illustrated in Figure \ref{fig:proposed_scheme}, and let the minimizer of the $\ell$-th program be $x^\star(S) = x_{\ell}^\star(S)$. We then have that
	\begin{align}
	\mathbb{P}^m &\bigg\{ (\delta_1,\ldots,\delta_{m}) \in \Delta^m: \mathbb{P} \big\{ \delta \in \Delta: g(x^\star(S),\delta) > 0 \big\}  > \epsilon  \bigg\} \nonumber  \\ 
	& \hspace{2.8cm} = \sum_{i = 0}^{r+d-1} {m \choose i}\epsilon^i (1-\epsilon)^{m-i}.\label{eq:thm_sampl_tight}
	\end{align} 
	\label{theo:main_result_tight}
\end{theo}
\begin{proof}
	\emph{Existence:} We first show that the set $ C $ given in~\eqref{eq:compression_candidate} is a compression for the mapping in~\eqref{eq:def_mapping_tight_case}. Recall that under Assumption~\ref{assump:fully_supp} we have that $ R_k(S) = \mathrm{supp}(x^\star_{ k }(S))$ for all $ k \in \{ 0, \ldots, \ell \} $. Applying a similar induction argument as in the existence part of Theorem~\ref{theo:main_result_fully_supp}, we have that $ x^\star_{ k }(C) = x^\star_{ k }(S) $ for all $ k \in \{ 0, \ldots, \ell \}. $ Hence, by the definition of the mapping $ \bar{\mathcal{A}}(C) $ in~\eqref{eq:def_mapping_tight_case}, we obtain that $ \bar{\mathcal{A}}(C) = \bar{\mathcal{A}}(S) $, thus showing that $ C $ in~\eqref{eq:compression_candidate} is a compression.
	
	\emph{Uniqueness:} Let $ C' $ be another compression of size $ (\ell + 1)d $. We will show that $ x^\star_{ k }(C') = x^\star_{ k }(S) $ for all $ k \in \{ 0, \ldots, \ell \}, $ which by the existence part yields that $ x^\star_{ k }(C) = x^\star(C') $ for all $ k \in \{ 0, \ldots, \ell \}. $ By Assumption~\ref{assump:Feas_Uniq} and \ref{assump:fully_supp}, this would then imply that $ C = C' $.
	
To show that $ x^\star_{ k }(C') = x^\star_{ k }(S) $ for all $ k \in \{ 0, \ldots, m \}, $ it suffices to show that for all $ \delta \in S \setminus C' $ we have that 
\begin{equation}
	 x^\star_{ k }(C') = x^\star_{ k }(C' \cup \{ \delta \}), \text{ for all }  k \in \{ 0, \ldots, \ell \}.
	 \label{eq:intermediate_res_tight}
\end{equation}
In fact, if~\eqref{eq:intermediate_res_tight} holds for all $  \delta \in S \setminus C' $ by induction it follows then that $ x^\star_{ k }(C') = x^\star_{ k }(S) $ for all $ k \in \{ 0, \ldots, \ell \} $. 

To show~\eqref{eq:intermediate_res_tight} assume for the sake of contradiction that there exist a $ \bar{\delta} \in S \setminus C' $ and a $ k \in \{ 0, \ldots, \ell \} $ such that $ x^\star_{k}(C) \neq x^\star_{k}(C'\cup \{ \bar{\delta} \}) $. Let $ \bar{k} $ be the smallest index such that this occurs and note that 
\begin{equation}
x^\star_{ \bar{k} }(C') = z^\star(C'\setminus \cup_{j = 0}^{\bar{k} - 1} \mathrm{supp}(x^\star_{j}(C'))),
\end{equation}
\begin{equation}
x^\star_{ \bar{k} }(C'\cup \{ \bar{\delta} \}) = z^\star((C'\setminus \cup_{j = 0}^{\bar{k} - 1} \mathrm{supp}(x^\star_{j}(C'))\cup \{\bar{\delta} \}),
\label{eq:proof_theo4_2}
\end{equation}
which implies that $ \bar{\delta} \in \mathrm{supp}(x^\star_{ \bar{k} }(C' \cup \{\bar{\delta}\})) $, as removal of $\bar{\delta}$ will change $x_{\bar{k}}^\star(C' \cup \{\bar{\delta}\})$ to $x_{\bar{k}}^\star(C')$. By Assumption~\ref{assum:tight_result} and since $\mathrm{supp}(x_j^\star(C'))= \mathrm{supp}(x_j^\star(C' \cup \{\bar{\delta}\}))$ for all $j=0,\ldots,\bar{k}-1$, we have that for all $ J \subset C'  \setminus \big( \cup_{j = 0}^{\bar{k} - 1} \mathrm{supp}(x^\star_{j}(C')) \cup \{ \bar{\delta} \} \big)$ with cardinality $ d$,
\begin{equation}
g(z(J),\bar{\delta}) > 0.
\end{equation}
Hence, since $ \bar{J} = \mathrm{supp}(x^\star_{ \ell }(C')) $ is a subset of cardinality $ d $ of $ C' \setminus \big( \cup_{j = 0}^{\bar{k}-1} \mathrm{supp}(x^\star_{j}(C')) \cup \{\bar{\delta}\} \big)$, as these constraints have not been removed from $C'$, we obtain that
\begin{equation}
g(z(\bar{J}),\bar{\delta}) = g(x^\star_{\ell}(C'),\bar{\delta}) > 0,
\label{eq:prop_theo4_3}
\end{equation}
where the equality follows from~\eqref{eq:z_def}.  However, $C'$ is assumed to be a compression set for $\bar{\mathcal{A}}$, which implies that $\delta \in \bar{\mathcal{A}}(C')$, i.e., $g(x_\ell^\star(C'), \bar{\delta}) \leq 0$. This is in contradiction with~\eqref{eq:prop_theo4_3}, implying that $x_k^\star(C') = x_k^\star(C' \cup \{\delta\})$, for any $k \in \{0,\ldots,\ell\}$, for any $\delta \in S \setminus C'$. Using induction, adding one by one $ \delta \in S \setminus C' $, we can then show that $ x^\star_{ k }(C') = x^\star_{ k }(S) = x^\star_{ k }(C) $ for all $ k \in \{ 0, \ldots, \ell \},  $ thus showing that $ C $ in~\eqref{eq:compression_candidate} is the unique compression set for the mapping defined in~\eqref{eq:def_mapping_tight_case}. 

By Theorem~\ref{theo:Kostas_paper}, we then have that
\begin{align}
&\mathbb{P}^m\{ (\delta_1, \ldots, \delta_m) \in \Delta^m: \mathbb{P}\{ \delta: \delta \notin \bar{\mathcal{A}}(C)\}  > \epsilon  \} \nonumber \\ &= \mathbb{P}^m\{ (\delta_1, \ldots, \delta_m) \in \Delta^m: \mathbb{P}\{ \delta: g(x^\star_{ \ell }(C), \delta) > 0\}  > \epsilon  \} \nonumber \\ 
 &= \mathbb{P}^m\{ (\delta_1, \ldots, \delta_m) \in \Delta^m: \mathbb{P}\{ \delta: g(x^\star_{ \ell }(S), \delta) > 0\}  > \epsilon  \} \nonumber \\
 &= \sum_{i = 0}^{r + d - 1} {m \choose i} \epsilon^i (1-\epsilon)^{m - i},
\end{align}
where the first equality follows since the union of support scenarios is a discrete set and will be of measure zero. To obtain the second equality we have used the fact that $ x_\ell^\star(C) = x^\star_{ \ell }(S) $ for the compression set defined in~\eqref{eq:compression_candidate}. This concludes the proof of Theorem~\ref{theo:main_result_tight}.
\end{proof}

\section{Numerical example} 
\label{sec:exam}

In this section, we consider a resource allocation problem to illustrate our theoretical result. Suppose that a manufacturer produces a good in $ d $ different locations, and that this good can be produced from $ n $ different resources. The quantity of resource $ p $, $ p = 1, \ldots, n $, that is needed to produce a unitary amount of the given good at facility $ j $, $ j = 1, \ldots, d $, is a random variable parametrized by $\delta \in \mathbb{R}$, and is denoted by $ a_{pj}(\delta) $. We assume that the amount of resources $ p $ available to all facilities is deterministic. The  objective is to maximize the production, given by $ \sum_{j = 1}^d x^j $, where $ x^j $ is the $ j-$th component of $ x \in \mathbb{R}^d $, while keeping the risk of running out of resources under control.

Under the scenario theory we do not have access to the distribution that generates $ a_{pj}(\delta), p = 1, \ldots, n, j = 1, \ldots, d $; however, we encode it by means of data $ (a_{pj}(\delta_i))_{i = 1}^m $ for all $ p = 1, \ldots, n $ and for all $ j = 1, \ldots, d $, and solve the following fully-supported convex scenario problem
\begin{align}
\minimise_{\{ x^j \geq 0\}_{j = 1}^d} \quad & c^\top x \nonumber \\
\mathrm{subject~to} & \quad A(\delta_i) x \leq b,~\text{ for all } i =1, \ldots,m, 
\label{eq:logistic_problem}
\end{align}
where, for each $ i \in \{1, \ldots, m \} $, $ A(\delta_i) \in \mathbb{R}^{n \times d} $ is a matrix whose $ (p,j) $-th entry is given by $ a_{pj}(\delta_i) $, $ b \in \mathbb{R}^n $ is a vector whose $ p-$th component is the amount of resource $ p $ available to all facilities, and $ c = \begin{bmatrix}
-1 & \ldots & -1
\end{bmatrix}^\top \in  \mathbb{R}^d. $

\begin{figure}[!t]
	\centering
	\includegraphics[width=0.85\linewidth]{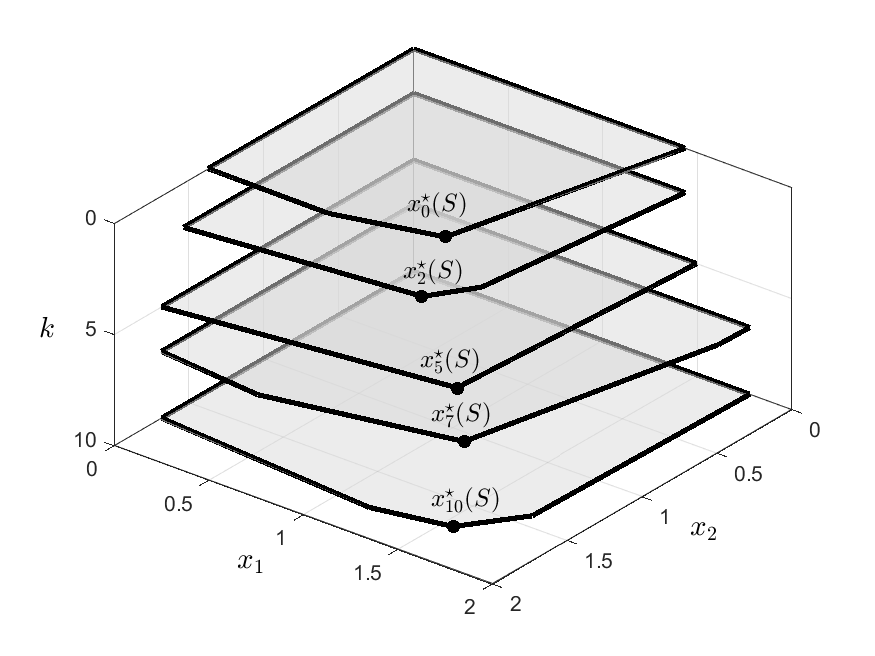}\\
	\caption{Feasibility sets of the intermediate problems $ P_k $, $ k = 0, 2, 5, 7, $ $ 10 $, for the scheme proposed in Figure~\ref{fig:proposed_scheme} when applied to \eqref{eq:logistic_problem}. The optimal solution of each problem is denoted by $ x^\star_{ k }(S) $, $ k = 0, 2, 5, 7 $, $ 10. $}
	\label{fig:2D_logistic_constraints}
\end{figure} 

\subsection{The two-dimensional case}

Set $ d = 2 $ and consider $ 2000 $ scenarios from the unknown distribution\footnote{For our simulations, fix $ i \in \{ 1, \ldots, m \} $ and set $ A(\delta_i) = 0.04 B(\delta_i) $, where $ B(\delta_i) \in \mathbb{R}^{n \times d} $, with entries obtained from a Laplacian distribution with mean equal to one and variance equal to three. Our numerical results were obtained setting the ``seed'' equal to $ 30 $ in \textsc{Matlab}.} for $ \delta $. We study the behavior of the scheme in Figure~\ref{fig:proposed_scheme} when we discard $ r = 20 $ of these scenarios. In this case, note that according to the description given in Section~\ref{sec:discard_scheme}, we have to solve a cascade of $ 11 $ optimization problems (i.e, $ \ell = 10 $ in the scheme of Figure~\ref{fig:proposed_scheme}).

\begin{figure}[!t]
	\centering
	\includegraphics[width=0.85\linewidth]{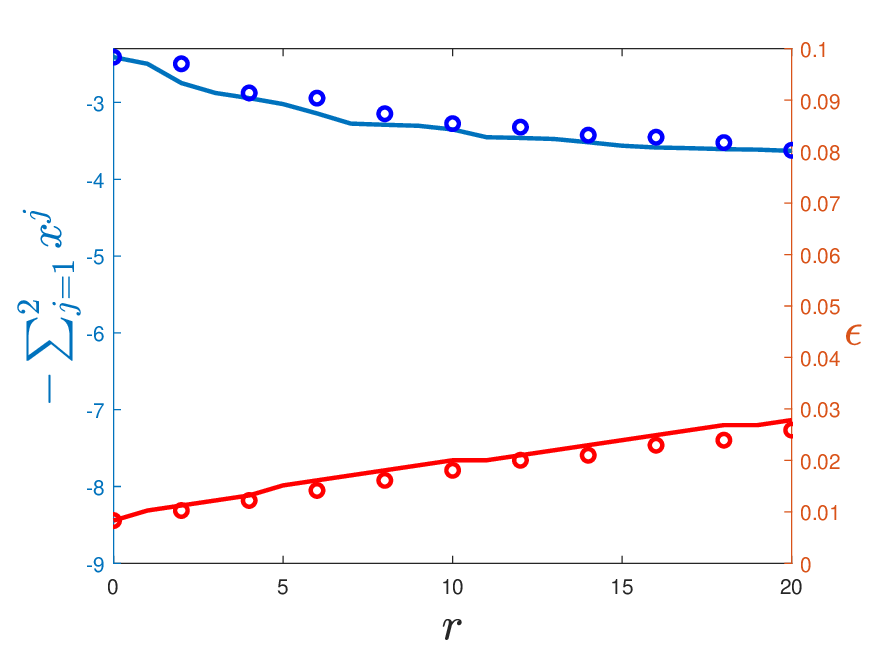}
	\caption{Cost and probability of constraint violation for the solution returned by the scheme of Figure~\ref{fig:proposed_scheme} and a greedy removal strategy for the problem in \eqref{eq:logistic_problem} when $ d = 2$, $ n = 2 $, and $ m = 2000 $. With the blue dots we show the cost obtained by the proposed procedure, where we are allowed to remove scenarios in batches of $d=2$, while the solid one shows the performance obtained by the greedy removal strategy where scenarios are removed one by one. In red we show the behavior of the probability of constraint violation.} 
	\label{fig:2D_logistic_constraints_2}
\end{figure} 
 
Figure~\ref{fig:2D_logistic_constraints} illustrates the feasible set for stages $ k = 0, 2, 5, 7, $ and $ 10 $ of the scheme of Figure~\ref{fig:proposed_scheme}, and depicts the corresponding optimal solution for each $ P_k  $ as $ x^\star_{ k }(S) $. Note that the feasible set associated to each problem $ P_k $ grows as we remove scenarios. To complement this analysis, we also show in Figure~\ref{fig:2D_logistic_constraints_2} a comparison between our method and the greedy scenario removal strategy as described in~\cite{CG:11}, which removes scenarios one by one according to one that yields the best improvement in the cost.  With the blue dots we show the cost obtained by the proposed procedure, where we are allowed to remove scenarios in batches of $d=2$, while the solid one shows the performance obtained by the greedy removal strategy, where scenarios are removed one by one. In red we show the corresponding behavior of the probability of constraint violation. This is calculated from the bounds of Theorem~\ref{theo:main_result_fully_supp} and Theorem~\ref{theo:sampl-disc}, respectively, with $\beta =  10^{-6}$.

\subsection{The 10-dimensional case}

Consider now~\eqref{eq:logistic_problem} with $d=10$ and the same $ 2000 $ scenarios. We compare the cost improvement of the proposed bound (Theorem~\ref{theo:main_result_non_deg}) with the one of Theorem~\ref{theo:sampl-disc}~\cite{CG:11}. To this end, for a given $ \epsilon \in [0.01,0.08] $, we compute the maximum number of scenarios that can be removed using each of these bounds. Note that due to the fact that we remove scenarios  in of $d$, we compute the number of scenarios that need to be removed by means of numerical inversion from the bound of Theorem 4 (using $m=2000$, $\beta = 10^{-6}$ and the given $
\epsilon$), and round it down to the closest multiple of $d=10$. For instance, for $ \epsilon = 0.03 $ the maximum number of scenarios that can be removed using the bound in~\eqref{theo:main_result_non_deg} is $ r = 18 $, but we only remove $ 10 $. Figure~\ref{fig:10D_logistic_2} shows then the relative cost difference $100 \times \frac{f^\star(\epsilon) - \bar{f}^\star(\epsilon)}{\bar{f}^\star(\epsilon)} $ as a function of $ \epsilon $, where $ f^\star(\epsilon) $ is the optimal value of problem~\eqref{eq:logistic_problem} when scenarios are removed according to Theorem~\ref{theo:main_result_non_deg}, and $ \bar{f}^\star(\epsilon) $ correspond to the bound in~\cite{CG:11}. For $\epsilon > 0.03$, our scheme leads to better optimal costs (i.e., the relative cost difference is negative), 
achieving approximately $ 4 \% $ of improvement when $ \epsilon = 0.08 $. This is due to the fact that more scenarios can be removed, while guaranteeing the same level of violation. Notice also that  there is no improvement when $ \epsilon \leq 0.03 $. This can be explained by two reasons: (1) due to the limitation on the number of removed scenarios, the proposed bound returns a value for $ r $ that is less than $ 10 $, hence no scenarios are removed (this is the case for $\epsilon \in [0.01,0.02]$);  or (2) the scenarios discarded by the greedy strategy lead to a better cost improvement (which happens for the case where $\epsilon = 0.03$).

\begin{figure}[!t]
	\includegraphics[width=0.85\linewidth]{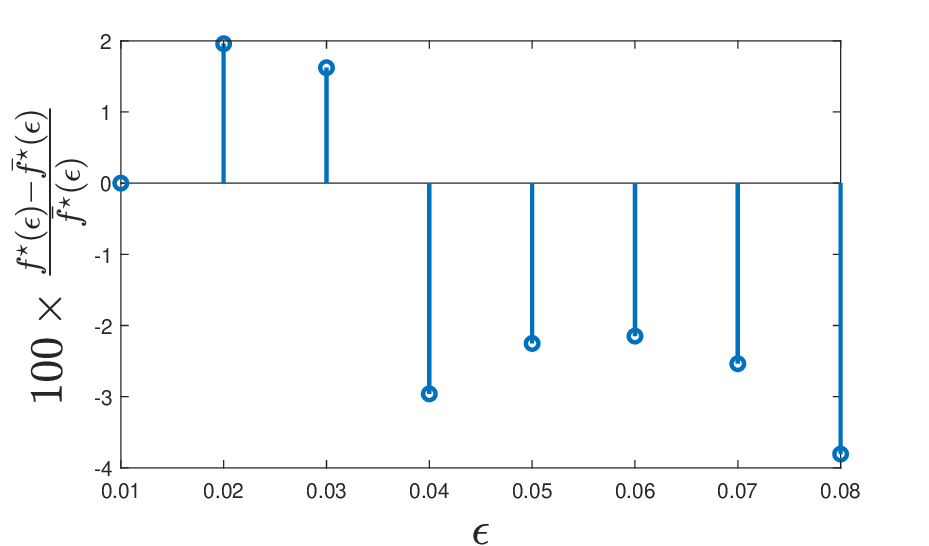}
		\caption{Relative cost improvement $100 \times \frac{f^\star(\epsilon) - \bar{f}^\star(\epsilon)}{\bar{f}^\star(\epsilon)} $, as a function of $\epsilon$, where $f^\star(\epsilon)$ corresponds to the cost associated with Theorem~\ref{theo:main_result_non_deg}, and $\bar{f}^\star(\epsilon)$ to the one of Theorem~\ref{theo:sampl-disc},~\cite{CG:11}. The numerical results correspond to \eqref{eq:logistic_problem} with $d=10$.}
	\label{fig:10D_logistic_2}
\end{figure}

Even though improving the computational requirements of the discarding procedure is not the main focus of our work, as a byproduct of the proposed removal scheme, the computational requirements of the proposed approach are lower with respect to the greedy removal strategy in~\cite{CG:11} (see also \cite{Cal:10}). To put this in perspective, to remove $ 100 $ scenarios in the previous example when $ d = 10 $, the greedy strategy requires the solution of $ 1101 $ optimization problems of the form~\eqref{eq:logistic_problem}, whereas the proposed scheme only needs to solve $ 11 $ of these problems. The computational savings are more pronounced as the dimension of the problem grows.  However, the performance improvement of the proposed discarding scheme with respect to the greedy removal strategy described in~\cite{CG:11,Cal:10} is problem dependent in general.

\section{Concluding remarks} \label{sec:concl}

In this paper we proposed a scenario discarding scheme that consists of a cascade of optimization problems, where at each stage we remove a superset of the support constraints. By relying on results from compression learning theory, we provide a less conservative bound on the probability of constraint violation of the obtained solution. Besides, we show that the proposed bound is tight, and characterize a class of problems for which this is the case. 

Current work concentrates towards extending our scenario discarding scheme so that we no longer remove scenarios in batches but one by one. Preliminary results in this direction can be found in \cite{RMP:21}. We also aim at exploiting the dual variables associated with each constraint in order to create a tie-break rule to choose the scenarios to be removed at each stage. 

\section*{Acknowledgement}
The authors are grateful to Professors Simone Garatti and Marco Campi for interesting discussions. Special thanks to Prof. Garatti for several insightful technical comments.

\balance

%\bibliographystyle{IEEEtran}
%\bibliography{C:/Users/Licio/Dropbox/library}

% Generated by IEEEtran.bst, version: 1.14 (2015/08/26)

\begin{IEEEbiography}[{\includegraphics[width=1in,height=1.25in,clip,keepaspectratio]{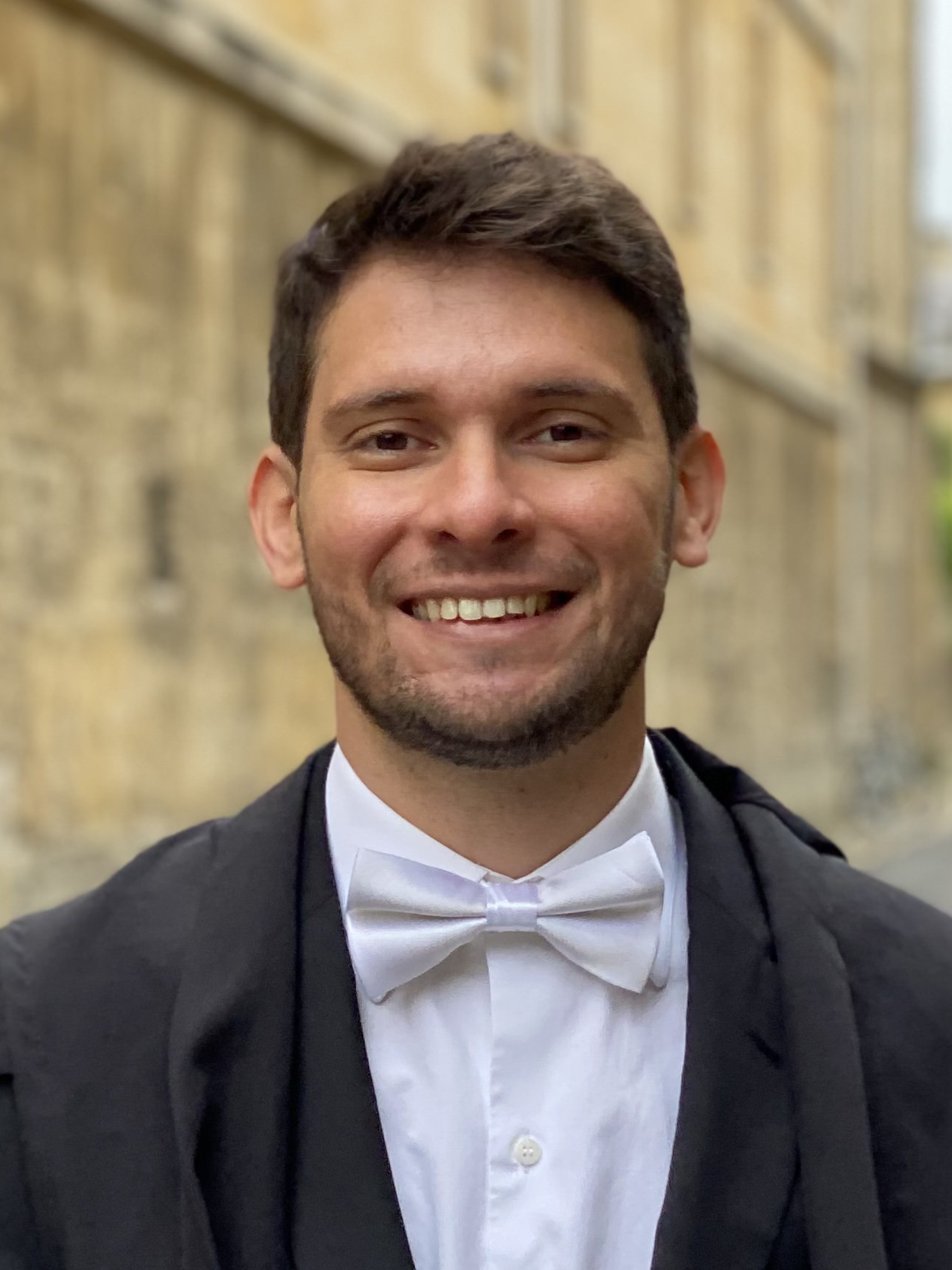}}]{Licio Romao}
	received the B.Eng. degree from the Federal University of Campina Grande (UFCG), Brazil, in 2014, the M.Eng. degree from the University of Campinas (UNICAMP), Brazil, in 2017, and the D.Phil. (Ph.D.) degree in Engineering Science from the University of Oxford, United Kingdom, in 2021. He is currently a postdoctoral research assistant at the Department of Computer Science, University of Oxford. His research interests include optimization and control strategies applied to large-scale, uncertain systems, as well as automatic verification and stochastic control with application to safety-critical systems. He is recipient of the 2021 IET Control and Automation Doctoral Dissertation prize.
\end{IEEEbiography}

\begin{IEEEbiography}[{\includegraphics[width=1in,height=1.25in,clip,keepaspectratio]{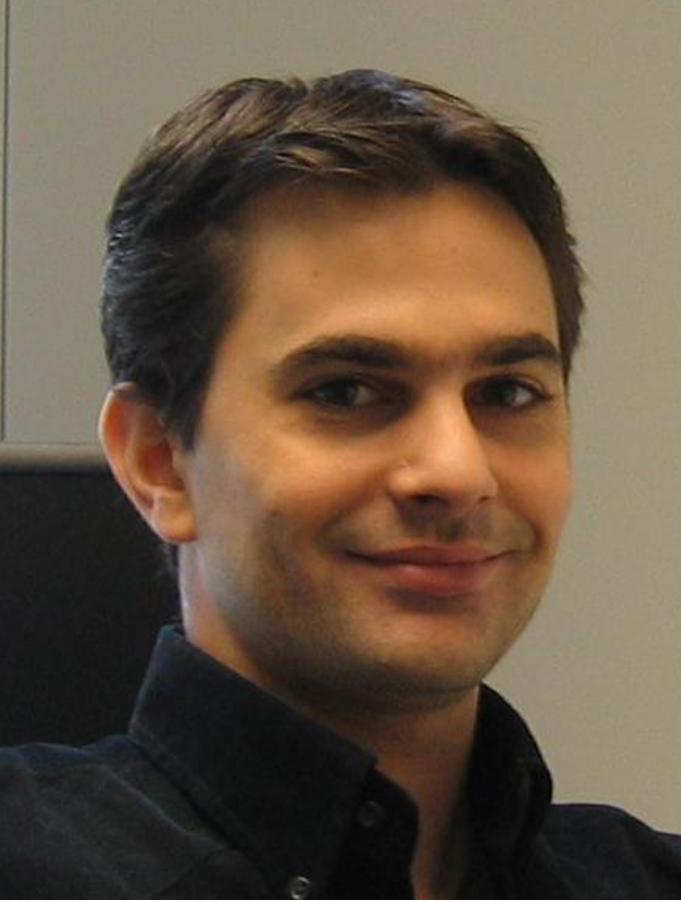}}]{Antonis Papachristodoulou}
	FIEEE received the M.A./M.Eng. degree in
	electrical and information sciences from the University of Cambridge,
	Cambridge, U.K., and the Ph.D. degree in control and dynamical systems
	(with a minor in aeronautics) from the California Institute of
	Technology, Pasadena, CA, USA. 	He is currently Professor of Engineering
	Science at the University of Oxford, Oxford, U.K., and a Tutorial Fellow
	at Worcester College, Oxford, as well as the Director of the EPSRC $\&$ BBSRC
	Centre for Doctoral Training in Synthetic Biology. He was previously an
	EPSRC Fellow. His research interests include
	large-scale nonlinear systems analysis, sum of squares programming,
	synthetic and systems biology, networked systems, and flow control.
	Professor Papachristodoulou received the 2015 European Control Award for
	his contributions to robustness analysis and applications to networked
	control systems and systems biology. In the same year, he received the
	O. Hugo Schuck Best Paper Award.
\end{IEEEbiography}

\begin{IEEEbiography}[{\includegraphics[width=1in,height=1.25in,clip,keepaspectratio]{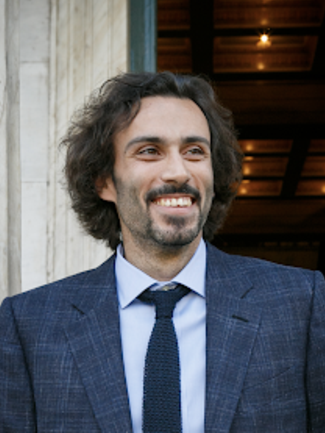}}]{Kostas Margellos} received the Diploma in electrical engineering from the University of Patras, Greece, in 2008, and the Ph.D. in control engineering from ETH Zurich, Switzerland, in 2012. He spent 2013, 2014 and 2015 as a postdoctoral researcher at ETH Zurich, UC Berkeley and Politecnico di Milano, respectively. In 2016 he joined the Control Group, Department of Engineering Science, University of Oxford, where he is currently an Associate Professor. He is also a Fellow of Reuben College and a Lecturer at Worcester College. His research interests include optimization and control of complex uncertain systems, with applications to energy and transportation networks.
\end{IEEEbiography}

\end{document}